\let\oldforall\forall
\let\forall\undefined
\DeclareMathOperator{\forall}{\oldforall}
\DeclareMathOperator*{\argmin}{argmin}
\newtheorem{prop}{Proposition}
\theoremstyle{remark}
\DeclareSymbolFont{matha}{OML}{txmi}{m}{it}
\DeclareMathSymbol{\varv}{\mathord}{matha}{118}
\newcommand{\Mod}[1]{\, (\text{mod}\ #1)}
\def\BState{\State\hskip-\ALG@thistlm}
\journal{Transportation Science}
\begin{document}

\begin{frontmatter}





\title{Average Distance of 
Random Bipartite Matching in One-dimensional Space and Networks
}


\author[label1]{Yuhui Zhai}
\author[label1]{Shiyu Shen}
\author[label1]{Yanfeng Ouyang}

\address[label1]{Department of Civil and Environmental Engineering, University of Illinois at Urbana-Champaign, Urbana, IL 61801, USA}

\begin{abstract}
The bipartite matching problem is widely applied in the field of transportation; e.g., to find optimal matches between supply and demand over time and space. Recent efforts have been made 
on developing analytical formulas to estimate the expected matching distance in bipartite matching with randomly distributed vertices
in two- or higher-dimensional spaces, but no accurate formulas currently exist for one-dimensional problems. 
This paper presents a set of closed-form formulas, without curve-fitting, that can provide accurate average distance estimates for one-dimensional random bipartite matching problems (RBMP). We first focus on a lattice case and propose a new method that relates the corresponding matching distance to the area size between a random walk path and the x-axis. 
This result directly leads to a straightforward closed-form formula for balanced RBMPs. 
For unbalanced RBMPs on a lattice, we first analyze the properties of an unbalanced random walk that can be related to balanced RBPMs after optimally removing a subset of unmatched points, and then derive a set of approximate formulas. 
Additionally, we build upon an optimal point removal strategy to 
derive a set of recursive formulas that can provide more accurate estimates.
Then, we extend the results to three problem variants, including RBMPs with periodic boundaries, uniformly distributed points, and arbitrary-length line.
Last, we shift our focus to regular networks, and use the one-dimensional results as building blocks to derive RBMP formulas.
To verify the accuracy of the proposed formulas, a set of Monte-Carlo simulations are generated for a variety of matching problems settings. 
Results indicate that our proposed formulas provide quite accurate distance estimations for one-dimensional line segments and networks under a variety of conditions. 
\end{abstract}

\begin{keyword}
Bipartite matching, matching distance, closed-form estimation, random, one-dimension space, network
\end{keyword}

\end{frontmatter}

\section{Introduction}
\label{sec: introduction}

The bipartite matching problem, defined on a bipartite graph with two disjoint set of vertices, is widely used in the field of transportation; e.g., to find optimal matches between supply and demand points over time and space in transportation services.
\textcolor{RoyalBlue}{
}
For example, in two-dimensional spaces, it can be used to model how surface vehicles (e.g., taxi) are matched to their customers, such as in ride-hailing or food delivery systems \citep{shen_zhai_ouyang_2024, Tafreshian_ridesharing_2020}. 
In three-dimensional spaces, it can be used to dynamically dispatch and reposition aerial vehicles (drones) for delivering goods in the air
\citep{Aloqaily_2022_UAV}, or to optimize the flying trajectories of drone swarms during take-off and landing \citep{Hernandez_2021_UAV}. 
In addition, bipartite matches that span the time dimension can be useful for finding the optimal schedules among a series of tasks \citep{Ding_FluidModel_2021, Af_queue_2022}, such as optimizing container transshipment among freight trains \citep{Fedtke_2017_rail}, or minimizing customer/vehicle waiting in reservation-based ride-sharing services \citep{shen_swap_2023}. 

Strictly speaking, bipartite matching applications in the transportation field are likely to be associated with a sparse network, where supply and demand points are distributed along network edges (after ignoring the local access legs) \citep{Abeywickrama_network_2022}. A special case would be one on one-dimensional transportation routes or corridors. 
For instance, 
it can be used to model the operations of drones that are used to deliver goods to customers located on different floors of a tall building \citep{Ezaki_2024_elevator, Seth_2023_highrise}, or vehicle-customer matching for a ride-hailing system on a single city corridor \citep{Nitish_resouceAllocation_2020}. 
In a more general setting, the matched points may be on different edges of a network, 
where the network topology can have a direct impact. 
Hence, it is particularly interesting to investigate how some of the key network features, such as edge density and node degree \citep{Boeing_2020_network, Wang_2020_rideshairngNetwork, Duan_2014_citynetwork}, may affect the matching results.


The bipartite matching problem in a network also has applications across many other science and engineering fields. In biology, it can be used to measure and analyze the similarity between heterogeneous genes
\citep{Zhang_bicliques_2014}, or to examine the structural relationship between proteins \citep{Wang_protein_2004}. 
In robotics, it can be applied to efficiently allocate tasks among multiple decentralized robots 
\citep{Ghassemi_robotMultiTask_2018}, or to design coordination mechanisms to prevent collisions along paths of multiple agents \citep{Dutta_robotPath_2017}. 
In computer science, it can be used to find optimal allocation of virtual machines to users, so as to reduce delay in mobile cloud computing \citep{Jin_schedule_2022}.

Any bipartite matching problem instance can be solved very efficiently using a range of well-known algorithms, including combinatorial optimization algorithms such as Hungarian algorithm \citep{Kuhn_Hungarian_1955} and Jonker-Volgenant algorithm \citep{Jonker_alg_1987}, or newly developed machine-learning based algorithms \citep{georgiev_neural_2020}. 
However, in the context of service and resource planning, one is often interested in estimating the average matching cost across various problem realizations, so as to evaluate the service efficiency under and resource investments. 
For example, in designing mobility services, the average matching distance, often referred to as the ``deadheading" distance, is a key indicator that captures the unproductive cost spent by both service vehicles (i.e., running without passengers) and customers (i.e., waiting for pickup). 
Analytical formulas that reveal the relationship between the average matching distance and vehicle/customer distribution, such as those developed in \cite{daganzo_1978} and \cite{yang_CobbDoglous_2010}, are often preferred by the service operators, because these formulas can not only provide valuable managerial insights, but also be directly incorporated into mathematical models to optimize service offerings. 
Similar formulas have been used to design system-wide operational standards (e.g., demand pooling time interval) for customer-vehicle matching \citep{shen_zhai_ouyang_2024}, evaluate the effectiveness of newly proposed customer matching strategies \citep{ ouyang_2023_swap, shen_swap_2023, Stiglic_2015_meeting}, or analyze the impacts of pricing and market competition on the social welfare \citep{wang_2016_pricing, zhou_2022_competition}. 

The need to estimate the expected bipartite matching distance for planning decisions has led to the exploration of a stochastic version of the problem, known as ``Random Bipartite Matching Problem (RBMP)" in the literature. 
Earlier studies in the field of statistical physics were among the first to explore such a problem. \cite{Mezard_replica_1985} used a ``replica method" to derive asymptotic formulas for the average optimal cost of a matching problem where the numbers of points in both subsets are nearly equal (i.e., balanced), and the edge weights identically and independently follow a uniform distribution.  
Building upon this work, \cite{Caracciolo_2014_scaling} and \cite{Caracciolo_2015_scaling} proposed scaling hypothesis and developed asymptotic matching distance approximations 
for balanced RBMPs in three- or higher-dimensional Euclidean spaces. 
However, these asymptotic approximations were derived under the strong assumption that the number of bipartite vertices approaches infinity, and hence could only serve as bounds rather than exact estimates when the number of vertices is small. More importantly, their proposed formulas require curve fitting that estimates coefficients from simulated data. 
\cite{daganzo_tlp_2004} studied a related problem, which they call the Transportation Linear Programming (TLP), and proposed approximated formula for estimating the average item-distance among points with normally distributed demands and supplies. Through probabilistic and dimensional analysis, they introduced a bound to estimate the solution in two- and higher dimensions. 
Very recently, \citet{shen_zhai_ouyang_2024} proposed a set of closed-form formulas for 
arbitrary numbers of points in both subsets, an arbitrary number of spatial dimensions, and arbitrary Lebesgue distance metrics. 
Their model has shown to provide very accurate estimates, without curve fitting, in spaces with higher than two dimensions. 
However, their approach ignores the boundary of the space as well as the resulted correlation among the matched pairs, which is reasonable for higher dimension spaces but causes notable errors in one-dimensional space especially 
when the the two subsets are (nearly) of equal size. 

For one-dimensional problems (i.e., along a line, such as the x-axis), the literature has extensively studied the balanced case when the edge cost between two vertices equals their distance raised to a power $p \in (-\infty,0) \cup (1, \infty)$. 
Among these, problems with $p > 1$ 
have received the most attention, as the following desirable property of the optimal matching solution holds: after sorting both sets of points respectively along the line, the $i$-th point from one set must be matched with the $i$-th point from the other set \citep{Caracciolo_2014_matching}. 
Building on this property, \citet{Caracciolo_2014_matching} derived asymptotic formulas in the joint limit as both the number of points and $p$ go to infinity;
\citet{Caracciolo_2017} extended the results to a larger class of strictly increasing convex cost functions and those 
ensuring monotonicity of shifted difference operators; and \citet{Caracciolo_2019} investigated the case with a finite number of points and $p > 1$. 
However, for $p \in (0, 1]$, including $p=1$ (i.e., 
the edge cost is equal to the exact distance), the problem remains an open challenge. 
In a separate line of inquiry, 
\cite{daganzo_tlp_2004} proposed an exact formula for the average minimal item-distance among points with normally distributed demand and supply in a one-dimensional space. Their results cannot be directly applied to RBMP, because they assume that (i) the supply and demand points are balanced, and that (ii) the supply or demand value associated with each randomly distributed point follows a normal distribution, while in RBMP, these values are either positive one or negative one. 
Nevertheless, their analysis provides very strong insights. One of their key findings is that the total minimal item-distance for all points is equal to the size of the area enclosed by the cumulative supply curve and the x-axis. This essential property also holds for our one-dimensional RBMP.

To the best of our knowledge, no existing formula can provide accurate estimates for RBMPs (with arbitrary numbers of bipartite vertices) in a one-dimensional space or in a network. 
This paper aims to to fill this gap by introducing a set of closed-form formulas (without curve-fitting) that can provide sufficiently accurate estimates. 
This is done in three steps. First, we estimate the expected optimal matching distance for a special one-dimensional RBMPs, where points can only be located on a regular lattice inside a unit-length line interval along the x-axis. 
Our proposed method relates the matching distance in a balanced RBMP to the enclosed area size between the path of a random walk and the x-axis, and derives a closed-form formula for the optimal matching distance for balanced RBMPs. 
For the more challenging unbalanced RBMPs, a closed-form approximate formula is developed by analyzing the properties of an unbalanced random walk and the optimal way to remove a subset of excessive points. 
Additionally, a feasible point removal and swapping process is proposed to develop a set of recursive formulas that are more accurate. 
Second, we study three variants of one-dimensional RBMP: one with periodic boundaries, one with uniform point distribution, and one in an arbitrary-length line interval.
The insights from these variants are used as building blocks in our third step to derive formulas for RBMPs on a regular network, where all points are generated from spatial Poisson processes along the edges.
The expected optimal matching distance is derived as the expectation across two probabilistic matching scenarios that a point may encounter: (i) the point is locally matched with a point on the same edge, or (ii) it is globally matched with a point on another edge. 
To verify the accuracy of proposed formulas, a set of Monte-Carlo simulations are conducted for a variety of matching problem settings, for both one-dimensional and network problems. 
The results indicate that our proposed formulas have very high accuracy in all experimented problem settings. 
The proposed distance estimates, in simple closed forms, could be directly used in mathematical programs for strategic performance evaluation and optimization.

The remainder of this paper is organized as follows. Section \ref{sec: model} below first presents models and formulas for 
balanced and unbalanced one-dimensional RBMPs (as a building block) 
when the points are distributed on a lattice.
Section \ref{sec: variant} presents formulas for three problem variants, including periodic boundary, uniform point distribution, and arbitrary-length line. 
Section \ref{sec: network model} then presents formulas for the problem in regular networks.
Section \ref{sec: numerical} then presents numerical experiments to validate the proposed formulas.
Finally, Section \ref{sec: Conclusion} provides concluding remarks and suggests future research directions.

\section{1D RBMP Distance on A Lattice}
\label{sec: model}

\subsection{Problem Definition}
\label{subsec: problem definition}
We begin by defining the unit-length one-dimensional RBMP 
on a lattice and open boundaries.
Two sets of points, with given respective cardinalities $n \in \mathbb{Z}^+$ and $m \in \mathbb{Z}^+$, are distributed inside a line segment defined over the interval $[0,1]$ along the x-axis.
Without loss of generality, we assume $n \geq m$. 
\textcolor{Black}{The line segment is evenly divided into a lattice of $n+m+1$ sub-intervals with equal length $l = \frac{1}{n+m+1}$. 
The two sets of points 
are randomly mixed, sorted, and sequentially assigned to the right endpoints of the first $n+m$ sub-intervals without any overlap.}
For each realization of the points' locations, denote $V$ and $U$ as the two point sets, where $|V| = n$ and $|U| = m$.
A bipartite graph can be constructed, whose set of edges $E$ connect every pair of points in the two sets; i.e., $E=\{(u,v): \forall u \in U, v \in V\}$. The weight on edge $(u,v)\in E$ is the absolute difference between the two points' coordinates $x_{u}$ and $x_v$, i.e., $\|x_{u}-x_v\|$. 
Since $n \geq m$, every point $u \in U$ can be matched with exactly one point $v \in V$. We let $y_{uv}=1$ if $u$ is matched to $v$, or $0$ otherwise. The objective is to find a set of matches $\{y_{uv}: \forall u\in U, v \in V\}$ that minimize the total matching distance, as follows: 
\begin{align} 
\label{eq: def}
\min &\sum_{u\in U, v\in V} y_{uv} \|x_{u}-x_v\|; \\
\label{eq: def_ctd}
\text{s.t. } & \sum_{v\in V} y_{uv} = 1, \forall u \in U; \quad 
\sum_{u\in U} y_{uv} \leq 1, \forall v \in V; \quad
y_{uv} \in \{0, 1\}, \forall u\in U, v\in V.
\end{align}

The average optimal matching distance across the realized points in $U$ is a random variable which depends on the random realization of $U$ and $V$. Its distribution is clearly governed by parameters $m$ and $n$, and hence we denote it $X_{m,n}$. 
We are looking for a closed-form formula for the expectation of the average optimal matching distance, $\mathbb{E}[X_{m,n}]$. In order to do that, we first show that $X_{m,n}$ 
can be estimated based on the enclosed area between the path of a related random walk and the x-axis, and then we take expectation of this enclosed area.
For a simple balanced problem (i.e. when $n = m$), we can directly derive a closed-form formula. 
For an unbalanced problem (when $n > m$), we first show optimality properties for the matching, and then build upon that to derive both a closed-form and a recursive formula.

\subsection{Random walk approximation}
\label{subsec: approximation}
For each realized instance of one-dimensional RBMP, let $I = U \cup V$. 
Sort all the points in $I$ by their x-coordinates 
between 0 and 1, and index them sequentially by $i$.
For point $i \in I$, denote $x_i \in [0,1]$ as its x-coordinate and $z_i$ as the value of its supply; i.e., $z_i=1$ if $i\in V$ (indicating a supply point), or $z_i = -1$ if $i\in U$ (indicating a demand point).
A cumulative ``net" supply curve can then be constructed for any coordinate $x$ and any subset of points $I' \subseteq I$; i.e., $S(x; I') = \sum_{\{\forall i\in I', x_i \leq x\}} z_i$. It is clearly a piece-wise step function. There are three special cases: $S(x; I)$ represents the net supply curve constructed by the full set of points in $I$; $S(0; I')$ and $S(1; I')$ represent the net supply values at both ends of the curve, $x=0$ and $x=1$, respectively. 

Every curve $S(x; I)$ can be related to the realized path of a specific type of one-dimensional random walk with $m+n$ steps, starting from $S(0; I) = 0$. 
Among these $m+n$ steps, exactly $n$ steps each increase the net supply by $1$ and $m$ each decrease the net supply by $1$; as such $S(1; I) = n-m$.
The locations of the points in $I$ correspond to the positions of these steps. 



Denote $A(x;I')= \sum_{\{\forall i\in I \setminus \{|I|\}, x_i \leq x\}} l\cdot |S(x_{i}; I')|$ 
as the total absolute area between curve $S(x; I')$ and the x-axis from 0 to $x$. 
Next we show how $\mathbb{E}[X_{m, n}]$ can be derived out of such an area, for both balanced and unbalanced matchings.




\subsection{Balanced case ($m=n$)}
We begin with the special case where $m=n$. Now set $I$ contains $2n$ points, and 
$A(1; I)$ is the total absolute area between curve $S(x;I)$ and the x-axis from 0 to 1. 
\cite{daganzo_tlp_2004} proved that $A(1; I)$ must equal the minimum total shipping distance of a one-dimensional TLP with equal number of supply and demand points, and thus it must also equal the minimum total matching distance of the corresponding one-dimensional RBMP instance. 
Thus, the expected optimal matching distance per point can be estimated by the following:
\begin{align} 
\label{eq: area_daganzo}
\mathbb{E}[X_{n, n}] = \frac{1}{n} \cdot \mathbb{E}[A(1; I)].
\end{align}

To further estimate $\mathbb{E}[A(1; I)]$, we first 
let $B(n)$ denote the expected area between the path of a standard random walk (i.e., whose step size equals unit length) and the x-axis. \cite{Harel_randomWalkArea_1993} has provided a formula for $B(n)$, as follows: 
\begin{align}
\label{eq: harel_unit_step}
B(n) = \frac{n2^{2n-1}}{\binom{2n}{n}}
\xrightarrow{n\gg 1} \frac{n\sqrt{\pi n}}{2}.
\end{align}
The last step of approximation, when $n \gg 1$, comes from Stirling's approximation. 
The basic intuition behind this formula is as follows. 
In a random walk with a fixed total number of steps (e.g., $2n$), there are a finite number of possible combinations of upward (e.g., $n+k$) steps and downward steps (e.g., $n-k$), when $k$ varies from 0 to $2n$. 
Next, for each $k$ value, the probability and expected area of a random walk can be determined: the probability is derived using backwards induction starting from some simple cases (e.g., $k=0$); the expected area, conditional on $k$, is computed by the absolute difference between the numbers of upward and downward steps, multiplied by the step size. 
(For example, for a random walk with $n+k$ upward steps and $n-k$ downward steps, its expected area between the curve and the x-axis can be show to be simply $2k$.) Then, Equation \eqref{eq: harel_unit_step} can be obtained by taking the unconditional expectation of these area sizes across all possible values of $k$.

Then, we can multiply $B(n)$ by \textcolor{Black}{the
step size $l=\frac{1}{2n+1}$ }to obtain
$\mathbb{E}[A(1; I)]$, and plug it into Equation \eqref{eq: area_daganzo} to obtain the following:
\begin{align}
\label{eq: exp_Xnn}
\mathbb{E}[X_{n, n}] 
= \frac{l \cdot B(n)}{n}
{\color{Black}{
= \frac{1}{2n+1} \cdot \frac{2^{2n-1}}{\binom{2n}{n}}}} 
\xrightarrow{n\gg 1} \frac{1}{4}\sqrt\frac{{\pi}}{n}.
\end{align}
\textcolor{Black}{Note here this formula has a constant time complexity $\mathcal{O}(1)$.}

\subsection{Unbalanced case ($n>m$)}

Next, we consider the 
unbalanced case where $n>m$, for which $n-m$ supply points from $V$ will remain unmatched for each realization. 
Let $V’ \subset V$ denote the set of these $n - m$ unmatched points. 
If we remove all points in $V'$ from $V$, the problem will reduce to a balanced one with an equal number (i.e., $m$) of demand and supply points. As a result, the values on the net supply curve after point removal, $S(x; I\setminus V')$, at $x=0$ and $x=1$ should both equal zero; i.e., 
\begin{align}
\label{eq: post_removal_S_1}
S(0; I\setminus V') = S(1; I\setminus V') = 0.
\end{align}
Figure \ref{fig: removal_process} shows an example of 
how such point removal process affects the net supply curve along the entire x-axis. 
In this figure, the points in $V$ and $U$ are represented by the red dots and blue triangles, respectively. The original and post-removal curves, $S(x; I)$ and $S(x; I \setminus V')$, are represented by the red dot-dash line and blue dashed line, respectively. The points in $V'$ and the net supply values at their corresponding coordinates on the original curve, $S(x_{v'}; I), \forall v'\in V'$, are marked by the black cross markers. 
Note that every time a point $v' \in V'$ is removed, the net supply values within $[x_{v'},1]$ will decrease by one. 
As a result, the cumulative reduction of net supply at position $x$ should be determined by the total number of removed points within $[0,x]$.
Sort the points in $V'$ by their x-coordinates, from left to right, as $\{v'_1, \dots, v'_{n-m}\}$. If we further denote $v'_0$ and $v'_{n-m+1}$ as two virtual points at the boundaries; i.e. $x_{v'_{0}}=0$ and $x_{v'_{n-m+1}}=1$, then the net supply values on the two curves within range $[x_{v'_k}, x_{v'_{k+1}})$ must satisfy the following relationship:
\begin{align}
\label{eq: post_removal_S_x}
S(x; I\setminus V') = S(x; I) - k, \forall k \in \{0, \cdots, n-m\}, x\in [x_{v'_k}, x_{v'_{k+1}}).
\end{align}
This relationship is illustrated in Figure \ref{fig: removal_process} by the black arrows. 

\begin{figure}[ht!] 
\centering 
\includegraphics[scale=0.7]{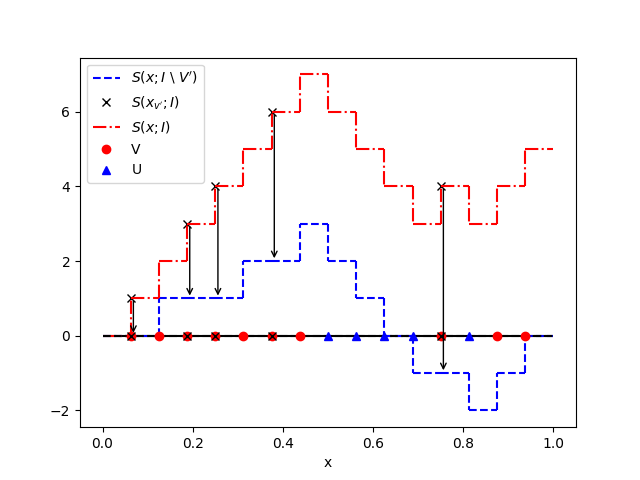}
\caption{Point removal process in an unbalanced problem.}
\label{fig: removal_process}
\end{figure}

Among all possible combinations of points in set $V'$, we denote $V^*$ as the optimal set of removed points that minimizes the area enclosed by the post-removal curve and the x-axis: 
\begin{align*}
V^{*}&= \argmin_{\forall V^{'}\subset V, \text{ } |V^{'}|=n-m } A(1; I\setminus V^{'}).
\end{align*}
The optimal post-removal area $A(1; I\setminus V^{*})$, enclosed by the optimal post-removal curve $S(1;I\setminus V^{*})$ and the x-axis, must equal the minimum total matching distance of the original unbalanced RBMP instance. 
Set random variable $Z_{m,n} = A(1; I\setminus V^{*})$ which depends on the random realization of $I= U\cup V$ (where $|V| = n$ and $|U| = m$), 
and then, 
similar to how we handle the balanced case: 
\begin{align}
\label{eq: exp_Xnm_incomplete}
\mathbb{E}[X_{m,n}] = \frac{1}{m} \cdot \mathbb{E}[Z_{m,n}].
\end{align}
In the following subsections, we will: (i) show the optimal post-removal curve must include a series of balanced random walk segments; 
(ii) derive an approximate closed-form formula for $\mathbb{E}[X_{m,n}]$ by estimating the area of each balanced segment; 
(iii) provide an alternative estimation for $\mathbb{E}[X_{m,n}]$ with a recursive formula based on a feasible point selection process; 
and (iv) refine both estimations with a correction term.

\subsubsection{Property of the optimal removal}
\label{subsec: opt_removal}

We first show a necessary condition for the removed points to be optimal: 
the $k$-th removed point must have a net supply value of $k$ on the original curve $S(x; I)$. This is stated in the following proposition.  

\begin{prop}
\label{prop: S_eq_k}
$S(x_{v^*_k}; I) = k, \forall k \in \{1,\dots, n-m\}. $
\end{prop}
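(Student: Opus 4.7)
The plan is a direct exchange argument on the post-removal curve $T(x) := S(x; I \setminus V^*)$. Since $T(x) = S(x; I) - f(x)$, where $f(x) := \#\{j : x_{v^*_j} \leq x\}$ counts the removed points in $[0, x]$, the claim $S(x_{v^*_k}; I) = k$ is equivalent to $T(x_{v^*_k}) = 0$. Three structural facts will be used throughout: $T$ is continuous at each $v^*_j$ (a removed point contributes no jump), $T(0) = T(1) = 0$ by the balanced-after-removal property, and $T$ is integer-valued.

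I assume, for contradiction, that $T(x_{v^*_k}) \geq 1$ for some $k$. Since $T(0) = 0 < 1 \leq T(x_{v^*_k})$, the curve must perform at least one $0 \to 1$ up-jump in $(0, x_{v^*_k})$; let $p \in V$ be the supply point at the position of the \emph{last} such jump, and let $x_p$ denote its coordinate. Then $p \notin V^*$ (only non-removed supplies raise $T$), and $T(x) \geq 1$ for every $x \in [x_p, x_{v^*_k})$: any dip back to $0$ would require a subsequent $0 \to 1$ jump to regain $T(x_{v^*_k}) \geq 1$, contradicting the choice of $p$. Form the swapped set $V^{**} := (V^* \setminus \{v^*_k\}) \cup \{p\}$ with counting function $f^{**}$. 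A direct bookkeeping check gives $f^{**}(x) - f(x) = \mathbf{1}(x_p \leq x) - \mathbf{1}(x_{v^*_k} \leq x)$, which equals $+1$ on $[x_p, x_{v^*_k})$ and $0$ elsewhere, regardless of how $p$ is interleaved with the remaining removals. Consequently,
\[
A(1; I\setminus V^{**}) - A(1; I\setminus V^*) = \int_{x_p}^{x_{v^*_k}} \bigl(|T(x) - 1| - |T(x)|\bigr)\, dx = -(x_{v^*_k} - x_p) < 0,
\]
contradicting optimality of $V^*$. The case $T(x_{v^*_k}) \leq -1$ is handled symmetrically: using $T(1) = 0$, take $q \in V \setminus V^*$ at the position of the first $-1 \to 0$ up-jump of $T$ strictly after $x_{v^*_k}$ and swap $v^*_k$ with $q$, which yields $\Delta A = -(x_q - x_{v^*_k}) < 0$. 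Both contradictions force $T(x_{v^*_k}) = 0$ for every $k$, which is precisely the claim.

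The main obstacle is establishing the uniform bound $T \geq 1$ on the \emph{entire} interval $[x_p, x_{v^*_k})$, rather than only in a right-neighborhood of $x_p$. The concern is that $T$ could dip to $0$ or lower in the middle and recover only across removed supply points, on which $T$ is flat. This is ruled out precisely because any recovery from $0$ back to $1$ must occur at a non-removed supply point, which would constitute a second $0 \to 1$ up-jump and violate the maximality of $p$. Once this uniformity is in place, the integrand $|T-1| - |T|$ collapses to the constant $-1$ on the integration range and the calculation of $\Delta A$ is immediate.
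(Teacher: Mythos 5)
Your proof is correct, and at its core it is the same exchange argument the paper uses: both contradict optimality by swapping the offending removed point with a non-removed supply point so that the post-removal curve shifts one unit toward zero on an interval where its sign is uniform, strictly reducing the enclosed area. Where you genuinely differ is in how the swap partner is located. The paper walks to the \emph{nearest} supply point on the relevant side of $v'_k$, must then split into cases according to whether that point is itself in $V'$, and when it is, iterates the scan (an implicit induction over $v'_{k+1}, v'_{k+2},\dots$) until a usable non-removed supply point is found. You instead select the \emph{extremal zero-crossing} of $T=S(\cdot\,;I\setminus V^*)$ --- the last $0\to1$ up-jump before $x_{v^*_k}$ when $T(x_{v^*_k})\ge 1$, or the first $-1\to0$ up-jump after it when $T(x_{v^*_k})\le -1$. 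That single choice automatically produces a point of $V\setminus V^*$ (removed points contribute no jumps to $T$) and simultaneously delivers the uniform sign bound on the entire swap interval, so the paper's case split and iteration disappear; your identity $f^{**}(x)-f(x)=\mathbf{1}(x_p\le x)-\mathbf{1}(x_{v^*_k}\le x)$ also makes explicit the re-indexing bookkeeping that the paper leaves implicit. The only cosmetic difference is that you argue by contradiction against the optimal $V^*$ while the paper proves the contrapositive for an arbitrary $V'$; these are equivalent here. In short, your route buys a shorter, cleaner proof of the same necessary condition, at the cost of no additional hypotheses.
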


\begin{proof}
To show the proposition holds, it is sufficient to show the following claim is true: 
For any point $v'_k \in V'$, if $S(x_{v'_k}; I) < k$ or $S(x_{v'_k}; I) > k$, we can always swap a point in $V'$ with another point in $V\setminus V'$ to reduce $A(x; I\setminus V^{'})$; hence, $V' $ cannot be optimal. 

We begin with the case when $S(x_{v'_k}; I) < k$. According to Equation \eqref{eq: post_removal_S_x}, 
$v'_k$ must have a negative net supply value on the post-removal curve; i.e.,
$S(x_{v'_k}; I\setminus V') 
< 0$. 
Figure \ref{fig: necessary condition} (a) shows an example point $v'_k$ (indicated by the cross marker) in such a condition. A portion of the post-removal curve including the removal of this single point, is represented by the red dot-dash line. 
Now we may check the points in $I$ to the right hand side of $v'_k$ along the x-axis, until we encounter the first supply point $v\in V$. Such a supply point $v$ is guaranteed to be within $(x_{v'_k}, 1]$, otherwise $S(1; I\setminus V') \le S(x_{v'_k}; I\setminus V') < 0$, which violates Equation \eqref{eq: post_removal_S_x}.

Two cases may arise here for $v$, 
depending on whether it is in $V'$ or not.
The first case is when $v \notin V'$, as shown in Figure \ref{fig: necessary condition} (a). 
Since $v$ is the first supply point to the right of $v'_k$, the points within $(x_{v'_k}, x_v)$, if any, must all be demand points, as shown by the blue triangles in Figure \ref{fig: necessary condition} (a). 
As all demand points have negative supply values, the net supply values on the post-removal curve within $(x_{v'_k}, x_v)$ must be no larger than $S(x_{v'_k}; I\setminus V')$; i.e., for $x\in (x_{v'_k}, x_v)$, $S(x; I\setminus V') \le S(x_{v'_k}; I\setminus V') < 0$.  
Now swap $v'_k$ out of $V'$, and swap $v$ in. Note here such a point swap would only affect the post-removal curve within $[x_{v'_k}, x_v)$, and after the swap, the original post-removal curve, $S(x; I\setminus V')$, will increase by one unit within $[x_{v'_k}, x_v)$, while all the other parts remain the same. 
The area size under the curve is strictly reduced by the point swap, by an amount of 
$A(x; I\setminus V^{'})$, as shown by the gray area in Figure \ref{fig: necessary condition} (a). The reduced area size is: 
\begin{align*}
& \sum_{\{\forall i \in I, x_{v'_k} \leq x_i < x_v\}} l \cdot |S(x_i; I \setminus V' \setminus \{v\} \cup \{v'_k\}) - S(x_i; I\setminus V')| 
=\sum_{\{\forall i \in I, x_{v'_k} \leq x_i < x_v\}} l >0,
\end{align*}
The last inequality holds because $v$ must exist. 
Therefore, the claim is true for $S(x_{v'_k}; I) < k$ (i.e., $S(x_{v'_k}; I\setminus V') < 0$) and $v \notin V'$. 

The other case occurs when $v \in V'$. Since $v$ is the first supply point encountered in $V'$ to the right of $ v'_{k}$, $v$ must be $ v'_{k+1}$. 
Again, 
all points within $(x_{v'_k}, x_v)$, if any, must be demand points. 
In addition, since $v'_{k+1}$ itself is a removed point, $S(x_{v'_{k+1}}; I\setminus V') \le S(x_{v'_k}; I\setminus V') < 0 $. 
Then, we may simply change our focus from $v'_{k}$ to $v'_{k+1}$, 
and then scanning the points on the right side of $v'_{k+1}$, and continue until we find a point $v' \in V'$ with $S(x_{v'}; I\setminus V') < 0$, and its 
next supply point $v \notin V'$. 
The proof for the previous case would apply for swapping $v'$ and $v$. 
Hence, the claim is also true for $S(x_{v'_{k}}; I) < k$ and $v \in V'$. 

When $S(x_{v'_k}; I) > k$, the proof is symmetric. We look for points to swap that can lead to area reduction for $A(x; I\setminus V^{'})$ to the ``left" hand side of $v'_k$ along the x-axis, instead of the right. 
According to Equation \eqref{eq: post_removal_S_x}, $v'_k$ now must have a positive net supply value on the post-removal curve; i.e., $S(x_{v'_k}; I\setminus V') > 0$. 
A supply point to the left must exist within $[0, x_{v'_k})$, or otherwise $S(0; I\setminus V') \geq S(x_{v'_k}; I\setminus V') > 0$, which violates Equation \eqref{eq: post_removal_S_1}. There are two similar cases here depending on whether $v$ is or is not in $V'$. The logic of the proof is exactly symmetrical. The only difference is that, after swapping the two points, the original curve will ``decrease" by one unit, instead of increase, within the interval. The area reduction is still strictly greater than zero, 
as shown in Figure \ref{fig: necessary condition} (b).

We have shown that the claim is true in all possible conditions. 
This indicates that if any removed point $v'$ in an arbitrary set $V'$ does not satisfy $S(x_{v'_k}; I) = k$, then $V'$ cannot be optimal. 
Therefore, for any removed point $v^*_k$ in an optimal set $V^*$, $S(x_{v^*_k}; I) = k$ must hold necessarily. 
This completes the proof. 

\end{proof}

\begin{figure}[ht!] 
\centering 
\includegraphics[scale=0.6]{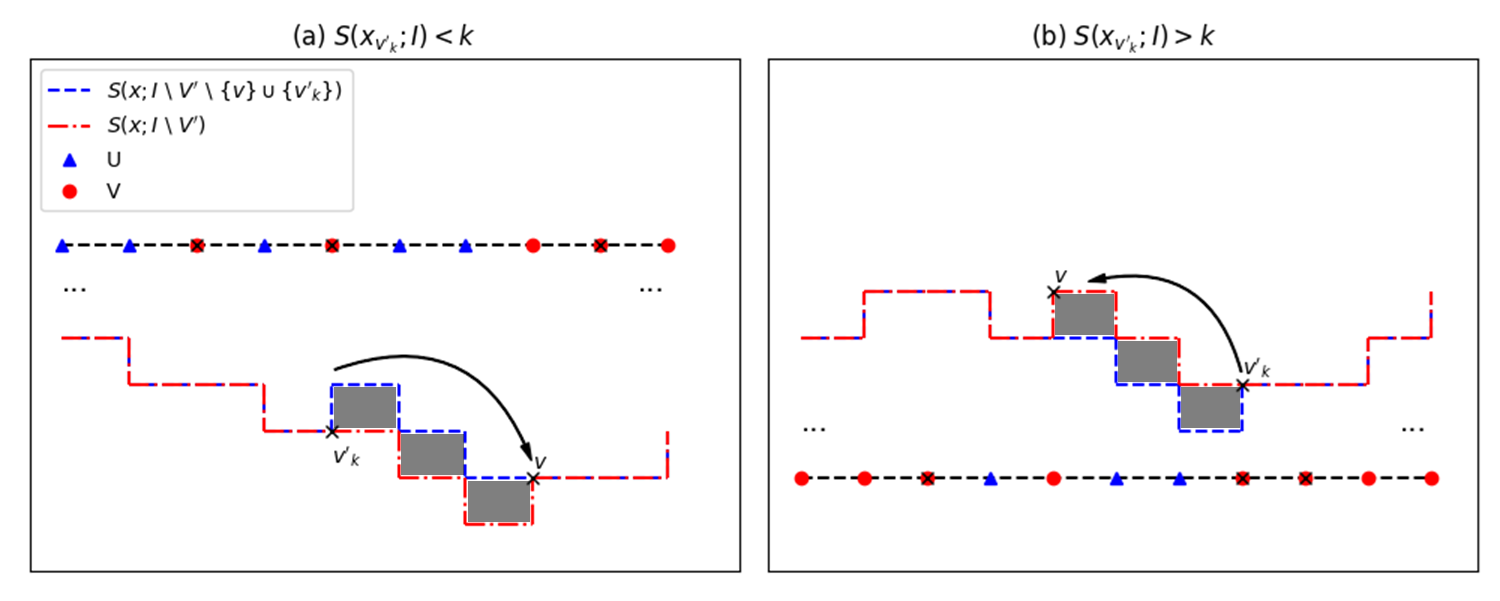}
\caption{Illustration of the point swap process}
\label{fig: necessary condition}
\end{figure}

Next, we present a useful property of the post-removal curve satisfying the optimality condition: 
$S(x_{v'_{k}}; I) = k, \forall k\in \{1, \dots, n-m\}$. 
According to Equation \eqref{eq: post_removal_S_x}, the net supply values on the post-removal curve at the locations of all removed points must be zero; i.e.: 
$S(x_{v'_{k}}; I\setminus V') = S(x_{v'_{k}}; I) - k = 0$. 
As such, the following proposition must hold.

\begin{prop}
\label{prop: balanced}
For any given $V'$, if $S(x_{v'_k}; I) = k, \forall k\in \{0, 1, \dots, n-m\}$, then each segment of the post-removal curve, $S(x; I\setminus V')$ within $(x_{v'_k}, x_{v'_{k+1}})$, can be regarded as the realized path of a corresponding balanced random walk.
\end{prop}


\subsubsection{An approximate closed-form formula}
\label{subsec: stars_and_bars}

Propositions \ref{prop: S_eq_k} and \ref{prop: balanced} show that the optimal post-removal curve $S(x; I\setminus V^*)$ contains $n-m+1$ segments with end points $\{x_{v^*_k}\}_{k = 0, 1, \cdots}$, and each segment $(x_{v^*_k}, x_{v^*_{k+1}})$ can be regarded as a realized path of a balanced random walk. 
Accordingly, we can decompose the optimal post-removal area $A(1; I\setminus V^*)$ into $n-m+1$ segments as follows:
\begin{align}
\label{eq: exp_Xnm_decomp}
&A(1; I\setminus V^*) = \sum_{k=0}^{n-m} \left[ A(x_{v^*_{k+1}}; I\setminus V^*) - A(x_{v^*_{k}}; I\setminus V^*) \right], 
\end{align}
where each term $A(x_{v^*_{k+1}}; I\setminus V^*) - A(x_{v^*_{k}}; I\setminus V^*)$ represents the area of a balanced random walk segment within range $(x_{v^*_k}, x_{v^*_{k+1}})$. 

For model convenience, we again ignore the impact of boundaries and assume that every point $v\in V$ is probabilistically identical and independent to be selected in $V^*$. 
Based on this assumption, we can directly apply Equation \eqref{eq: harel_unit_step} to estimate the expected area of each balanced random walk segment,
which shall be dependent on only the number of (balanced) points and the mean step size within each segment. 
Denoting $m_k$ as the number of demand (or supply points) in $U$ in the $k$-th segment, 
$\mathbb{E}[Z_{m,n}]$ can then be approximately estimated as the following sum: 
\begin{align}
\label{eq: exp_Xnm_decomp_2}
\mathbb{E}[Z_{m,n}] \approx  \sum_{k=0}^{n-m} \mathbb{E} \left [ l\cdot B(m_k) \right].
\end{align}
With our i.i.d. assumption for $V^*$ and disregard of the boundary effects, we must have $m_k, \forall k \in\{0, \dots, n-m\}$, to be i.i.d. This may result in an underestimation, because points at specific positions may have a higher probability of being selected in $V^*$ than others due to the presence of boundaries.

Then, we may only focus on the expected area of the first segment; i.e., when $k=0$. 
Let $\text{Pr}\{m_0=m'\}$
denote the probability that the first segment contains exactly $m' \in \{1, \cdots, m\}$ demand points, noticing that 
the total number of demand points across all $n-m+1$ segments must equal $m$; i.e., $\sum_{k=0}^{n-m} m_k = m$. 
To derive $\text{Pr}\{m_0=m'\}$
, one may relate it to the well-known ``stars and bars" combinatorial problem.
When we use $n-m$ ``bars" (points in $V^*$) to partition $m$ stars (all matched pairs of demand and supply points), 
there are $\binom{n}{n-m}$ possible combinations for such a partition. 
Once the first segment has been set to contain $m'$ stars, there remain $n-m'-1$ positions for the remaining $n-m-1$ bars to be placed, resulting in $\binom{n-m'-1}{n-m-1}$ possible combinations.
That is, 
\begin{align}
\label{eq: prob_m}
&\text{Pr}\{m_0=m'\} = \frac{\binom{n-m'-1}{n-m-1}}{\binom{n}{n-m}}.
\end{align}
Hence, following Equations \eqref{eq: exp_Xnn}, \eqref{eq: exp_Xnm_incomplete}, 
and \eqref{eq: prob_m}, and note $l=\frac{1}{n+m}$ in this case, we now have a closed-form formula for $\mathbb{E}[X_{m,n}]$, as follows:
\begin{align}
\label{eq: exp_Xnm}
\mathbb{E}[X_{m,n}] & \approx 
(n-m+1)\cdot\mathbb{E}\left[ l \cdot B(m_0) \right] \approx 
(n-m+1)\cdot l\cdot \sum_{m'=0}^{m} \text{Pr}\{m_0=m'\} \cdot B(m') \nonumber \\
&\approx \frac{n-m+1}{m(m+n)}\cdot 
\sum_{m'=0}^{m} \frac{\binom{n-m'-1}{n-m-1}}{\binom{n}{n-m}} \cdot \frac{m'2^{2m'-1}}{\binom{2m'}{m'}}.
\end{align}
{
\color{Black}
The presence of binomial coefficients in Equation (\ref{eq: exp_Xnm}) may lead to numerical overflow when $n$ and $m$ are very large. To address this, one can apply a logarithmic transformation to
all related terms and compute log-Gamma functions as a stepping stone; e.g., $\binom{n}{m} = \exp\left(\ln \binom{n}{m}\right) = \exp(\ln \Gamma (n + 1) -\ln\Gamma(m + 1)-\ln\Gamma(n-m+1))$. 
If we evaluate the binomial coefficients via Lanczos approximation \citep{lanczos1964gamma}, which incurs a constant time, then the computation of Equation (\ref{eq: exp_Xnm}) has a linear time complexity $\mathcal{O}(m)$.}

Again, this formula is approximate due to our simplifying assumptions. 
In the next section, we will present an alternative method to yield a more accurate estimation via a specific point removal and swapping process. 
 

 \subsubsection{Recursive formulas}
\label{subsec: recursive}
In theory, the optimal point removal for each realization shall be solved as a dynamic program (and possibly via the Bellman equation); however, such an approach is not suitable for closed-form formulas. 
Therefore, this section builds upon Propositions \ref{prop: S_eq_k} and \ref{prop: balanced} to propose a simpler point removal and swapping process that can yield a near-optimum point-removal solution for each realization. Then the area size estimation across realizations, based on this process, can be derived into a set of recursive formulas. The result provides a tight upper bound for $\mathbb{E}[X_{m,n}]$. 

First, we propose a point removal process that is developed based on the necessary optimality conditions in Proposition \ref{prop: S_eq_k}, such that it provides a reasonably good (e.g., locally optimal) balanced random walk. 
We initialize $\hat{V} = \emptyset$ and $k=1$. Starting from $x = 0$, scan the supply points in $V$ from left to right along the x-axis and check if a point satisfies the following conditions: 
(i) it has a net cumulative supply value of $k$; 
(ii) the nearest neighbor (in $I$) on the left, if existing, has a net supply value of $k-1$; 
and (iii) the net supply values of all points (in $I$) on the right hand side of this point is greater than or equal to $k$. 
If conditions (i)-(iii) are all satisfied by a point, denoted $\hat{v}_{k}$, then add it to the current set $\hat{V}$,  
increase $k$ by 1, and repeat the above procedure until $k=n-m$. 
The net supply values of the points selected in this procedure are guaranteed to form an increasing sequence; i.e.: 

\begin{align*}
S(x_{\hat{v}_k}; I) = k, \quad \forall k\in \{1, \dots, n-m\}.     
\end{align*}
This is because, at step $k$ of the above process, the curve $S(x; I)$ must have a net supply value of $k-1$ at the previously selected point $\hat{v}_{k-1}$, and a value of at least $n-m$ at the final step at $x = 1$. Hence, from intermediate value theorem, point $\hat{v}_{k}$ can always be found in $(x_{\hat{v}_{k-1}}, 1]$. 
Note that the removal of $\hat{v}_k$ at step $k$ only affects the curve on its right-hand side, with the segments on its left-hand side being balanced random walks. 
From the construction of the process, the post-removal cumulative net supply curve $S(x; I\setminus \hat{V})$ remains non-negative in $(x_{\hat{v}_k}, 1]$ at every step; i.e.,  
\begin{align*}
S(x; I\setminus \hat{V}) \geq 0, \quad \forall x \in (x_{\hat{v}_k}, 1], \text{ } k\in \{1,\cdots,n-m\},    
\end{align*}
which also implies that 
$S(x;I\setminus\hat V)\geq 0$ for all $x \in (x_{\hat{v}_1}, 1]$ throughout the entire process.
Figure \ref{fig: feasible_removal} (a) shows a simple example of the point removal process, with $n-m = 2$. 
The original curve, $S(x; I)$, is represented by the red dot-dash line, and two selected removal points $\hat{v}_1$ and $\hat{v}_2$ are represented by the black cross markers. 
At step 1 and 2, the removal of $\hat{v}_1$ and $\hat{v}_2$ decrease the net supply values of points in the red and purple shaded regions, respectively. 

\begin{figure}[ht!] 
\centering 
\includegraphics[scale=0.65]{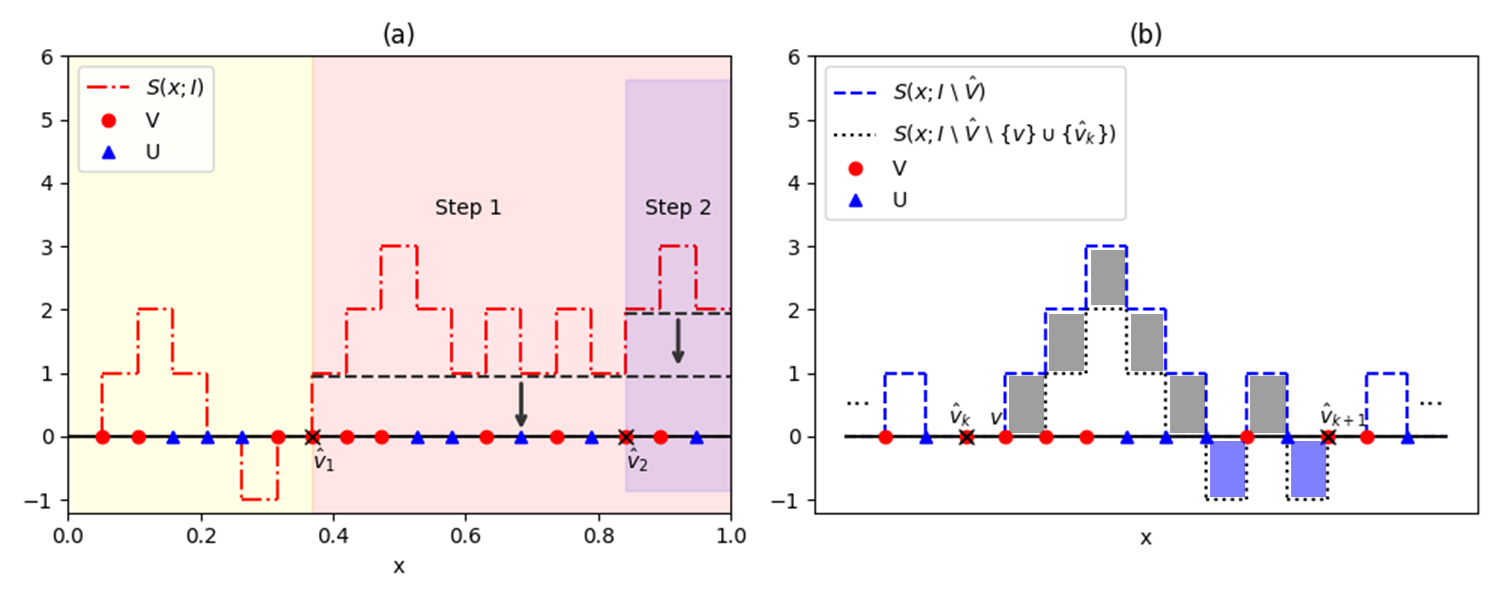}
\caption{Illustration of the point removal and swap procedure.}
\label{fig: feasible_removal}
\end{figure}

We again use the term ``segment of the curve" but now refer to the post-removal curve $S(x; I\setminus \hat{V})$ within $(x_{\hat{v}_{k}}, x_{\hat{v}_{k+1}})$ as the ``$k$-th segment" (which must be balanced). 
From the perspective of each point $\hat{v}_k$, the post-removal area in $(x_{\hat{v}_{k}}, 1]$ 
includes those within the $k$-th segment and $(x_{\hat{v}_{k+1}}, 1]$, both of which depend on the length of the $k$-th segment. 
Let $\text{Pr}\{\hat{m}_k \mid a\}$ 
denote the probability for the $k$-th segment to contain $\hat{m}_k$ demand (or supply) points, when there are exactly $a$ demand points in 
$(x_{v_{k}}, 1]$.
In the step to select $v_{k+1}$, there are $n-m-k$ supply points to be removed; thus, 
there are $a+n-m-k$ supply points in $(x_{v_{k+1}}, 1]$.
That is, the original curve $S(x;I)$ 
starts from value $k$ at $\hat{v}_{k}$ and returns to $k$ after exactly $2\hat{m}_k$ steps, 
which occurs with probability 
\begin{align*}
\binom{a}{\hat{m}_k}\binom{a+n-m-k}{\hat{m}_k}/\binom{2a+n-m-k}{2\hat{m}_k},
\end{align*}
but never returns to value $k$ afterwards
, with probability from the well-known Ballot's theorem \citep{Addario_2008_Ballot}:
\begin{align*}
\frac{n-m-k}{2a+n-m-k-2\hat{m_k}}.    
\end{align*}

As such, we have 
\begin{align}
\label{eq: prob_0}
\text{Pr}\{\hat{m}_k \mid a\}
= \frac{\binom{a}{\hat{m}_k}\binom{a+n-m-k}{\hat{m}_k}}{\binom{2a+n-m-k}{2\hat{m}_k}} \cdot \frac{n-m-k}{2a+n-m-k-2\hat{m}_k}.
\end{align}

Let $\hat{Z}_{k, a}$ represent the area enclosed by the x-axis and the post-removal curve $S(x;I\setminus \hat{V})$ within $(x_{\hat{v}_k}, 1]$, which contains exactly $a$ demand points. 
From Equation \eqref{eq: harel_unit_step}, the expected areas size of the balanced random walk inside the $k$-th segment $(x_{\hat{v}_{k}}, x_{\hat{v}_{k+1}})$ is simply:
\begin{align*}
l \cdot B(\hat{m}_k).     
\end{align*}
Hence, 
conditional on $\hat{m}_k$, we have
\begin{align}
\label{eq: exp_Z_ab_wo_swap}
\mathbb{E}[\hat{Z}_{k, a}] 
&= \sum_{m'=0}^{a} \text{Pr}\{\hat{m}_k=m' \mid a \}\cdot\left[ l \cdot B(m') + \mathbb{E}[\hat{Z}_{k+1, a-m'}]\right], \nonumber \\
& \forall k \in \{0, \dots, n-m-1\}, \quad 0 \le a \le m,
\end{align}
and when $k=m-n$, 
\begin{align*}
\mathbb{E}[\hat{Z}_{n-m, a}] = l \cdot B(a), \quad \forall 0 \le a \le m.
\end{align*}
When $k=0$, $\mathbb{E}[\hat{Z}_{0,m}]$ represents the expected area of the entire post-removal curve. 

Next, we propose a refinement process based on local point swapping. It is intended to further reduce the post-removal area for a more accurate upper bound estimate.  
For all $k \in \{1, \dots, n-m-1\}$,\footnote{Recall that all points in these segments have ``non-negative" post-removal curve values. We cannot perform such a point swap on the $(n-m)$-th segment because $\hat{v}_{n-m+1} \notin \hat{V}$. } if there exists a point $v \in V$ 
that satisfies (i) $x_{\hat{v}_{k}} < x_{v} < x_{\hat{v}_{k+1}}$, and (ii) 
$v$ is the nearest neighbour of $\hat{v}_{k}$ on the right, then we 
swap $\hat{v}_{k+1}$ out of set $\hat{V}$ and swap point $v$ in. 
We propose to perform exactly one such point swap\footnote{Although additional point swaps could be performed, we limit the process to one swap per segment in order to maintain the model’s simplicity and tractability. Moreover, the marginal benefits of additional swaps are expected to diminish.} to each of the segments. 

An example of point swap is illustrated in Figure \ref{fig: feasible_removal} (b). 
The post-removal curve $S(x; I\setminus \hat{V})$ is represented by the blue dashed curves.
After a swap between points $\hat{v}_{k+1}$ and $v$, the $k$-th curve segment will be shifted down by one unit (while all other segments remain the same), as indicated by the black dotted curves. 
Such a point swap can result in both reductions (if net supply $S(x; I\setminus \hat{V}) >0$ before the swap, as the grey rectangles) and additions (if net supply $S(x; I\setminus \hat{V})=0$ before the swap, as the blue rectangles) to the enclosed area size. 
The following proposition says that the expected total area size will be reduced for each of the swaps. This indicates that the proposed point swapping process will yield a smaller (or at least equal) expected post-removal area size.

\begin{prop} 
Suppose $\hat{m}_{k,0} \le \hat{m}_k$ is the random number of points with zero net supplies within the $k$-th segment. The expected area reduction in the $k$-th segment is
\begin{equation} 
l \cdot \left(
2\hat{m}_k - 2\mathbb{E}[\hat{m}_{k,0} \mid \hat{m}_k]
\right), \quad
\forall k \in \{1, \dots, n-m-1\},
\label{area-reduction}
\end{equation}
and it is nonnegative.
\begin{proof}
According to the point swapping process, 
the area size reduction can be computed as the difference between the number of positive net-supply points, $2\hat{m}_k-\hat{m}_{k,0}$, and that of the zero net-supply points $\hat{m}_{k,0}$, multiplied by the step size $l$.  
Thus, the expectation of the area change with respect to $\hat{m}_{k,0}$, conditional on $\hat{m}_k$, is simply given by the left-hand side of Equation \eqref{area-reduction}. We next show that this term must be non-negative.

To compute $\mathbb{E} [\hat{m}_{k,0} \mid \hat{m}_k]$, it is equivalent to compute the expected number of times a random walk with $2\hat{m}_k$ steps returns to zero. 
Clearly, this expectation equals zero when $\hat{m}_k = 0$. Meanwhile, \cite{Harel_randomWalkArea_1993} derived the probability that a random walk with $2\hat{m}_k$ steps returns to zero at the $2j$-th step, $\forall j \in \{1, \dots, \hat{m}_k\}$: 
\begin{align*}
\binom{2j-1}{j} \binom{2\hat{m}_k-2j}{\hat{m}_k-j} / \binom{2\hat{m}_k-1}{\hat{m}_k}
, \quad \forall k \in \{1, \dots, n-m-1\}
.    
\end{align*}
Summing these probabilities across all possible values of $j$, 
we have
\begin{equation}
\label{eq: exp_mk0}
\mathbb{E}[\hat{m}_{k,0}\mid \hat{m}_k] = 
\begin{cases}
0, \quad &\forall \hat{m}_{k} = 0,\\
\sum_{j=1}^{\hat{m}_k} \binom{2j-1}{j} \binom{2\hat{m}_k-2j}{\hat{m}_k-j} / \binom{2\hat{m}_k-1}{\hat{m}_k}, \quad &\forall \hat{m}_{k} \neq 0.
\end{cases}
\
\end{equation}
When $\hat{m}_k\neq 0, $ each term inside the summation represents a probability and must be less than or equal to 1. Hence, the entire summation must be less than or equal to $\hat{m}_k$. As a result, the expected reduced area given by Equation \eqref{area-reduction} must be non-negative.
\end{proof}
\end{prop}

Now, adding 
\eqref{area-reduction} as a correction term into 
\eqref{eq: exp_Z_ab_wo_swap}, and note $l = \frac{1}{n+m}$, we have
\begin{equation}
\begin{aligned}
\label{eq: exp_area_tail_swap}
&\mathbb{E}[\hat{Z}_{0, m}] = \sum_{m'=0}^{m} \text{Pr}\{\hat{m}_0= m' \mid m\}\cdot \left[ l \cdot B(m') + \mathbb{E}[\hat{Z}_{1, m-m'}]\right], \\
&\mathbb{E}[\hat{Z}_{k, a}] = \sum_{m'=0}^{a} \text{Pr}\{\hat{m}_k=m' \mid a \}\cdot\left[l \cdot B(m') -  l \cdot (2m' - 2\mathbb{E} [\hat{m}_{k,0} \mid m']) + \mathbb{E}[\hat{Z}_{k+1, a-m'}]\right], \\
& \qquad \qquad \qquad \qquad \qquad \qquad \qquad \qquad \qquad \qquad \qquad \qquad \qquad \quad
\forall k \in \{1, \dots, n-m-1\},\\
&\mathbb{E}[\hat{Z}_{n-m, a}] = l \cdot B(a).
\end{aligned}
\end{equation}
Together with \eqref{eq: prob_0} and \eqref{eq: exp_mk0}, all the above expected area sizes can be solved recursively. The expected matching distance $\mathbb{E}[X_{m,n}]$ is related to $\mathbb{E}[\hat{Z}_{0,m}]$, as follows:
\begin{align}
\label{eq: exp_Xnm_upper}
&\mathbb{E}[X_{m,n}] \approx \frac{1}{m}\cdot \mathbb{E}[\hat{Z}_{0,m}].
\end{align}
\textcolor{Black}{Again, all binomial coefficients in the above formulas can be computed accurately via log-Gamma functions and Lanczos approximation in constant time. Then, the recursive computation of this formula has a polynomial time complexity: $\mathcal{O}((n-m)m^2)$.}

\section{1D RBMP Variants}

The results in Section \ref{sec: model} hold for the one-dimensional RBMP problem on a lattice. This section introduces three one-dimensional problem variants that serve as building blocks towards network problems.

\label{sec: variant}
\subsection{Periodic Boundary}
\label{subsec: periodic}
{
In this section, we study a problem variant with periodic boundaries
~\citep{Boniolo_2014}, where the two sets of points in $U$ and $V$ are evenly distributed along a unit-length ``ring" instead of a line segment. 
Here we only consider the case where the two sets contain an equal and sufficiently large number of points, and let $X^\circ_{n, n}$ denote the average optimal matching distance. 

A problem instance on the ring can be seen as being ``bent" from interval $[0,1]$ such that points $x=0$ and $x=1$ overlap. The cumulative net supply curve can be constructed for any coordinate $x$ along the ring and any subset of points $I' \subseteq I$; i.e., $S(x; I') = \sum_{\{\forall i\in I', x_i \leq x\}} z_i$. 
Figure \ref{periodic} shows an example of one problem instance, where the red dots and blue triangles represent points from $V$ and $U$, respectively, and the red dot-dash line represents the cumulative net supply curve $S(x;I)$. 

\begin{figure}[H]
    \centering
    \includegraphics[width=0.6\linewidth]{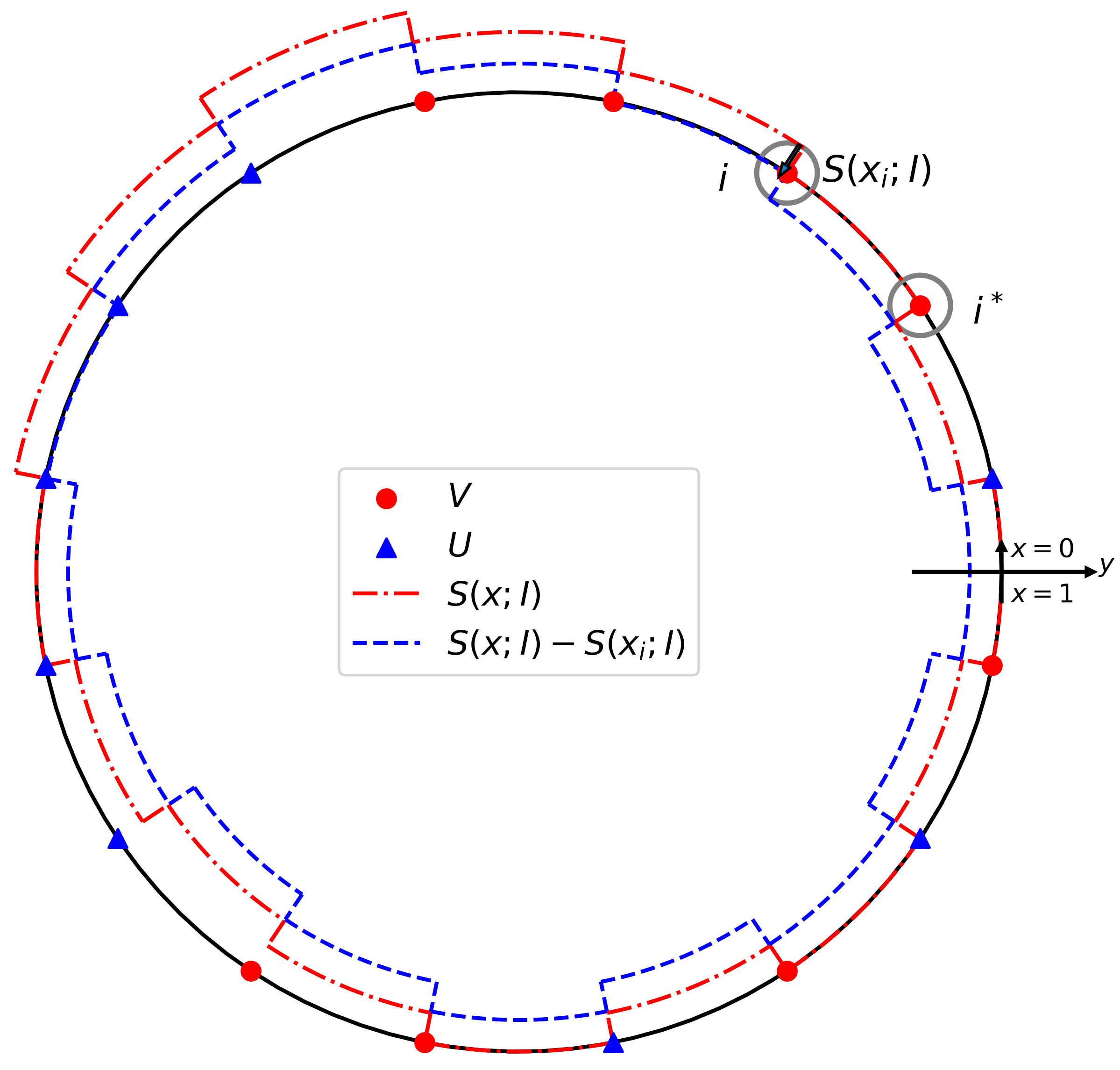}
    \caption{Cumulative net supply curve with periodic boundaries.}
    \label{periodic}
\end{figure}

We next show a special property of the optimal matching based on \cite{werman_1986}. 
As different cumulative net supply curves (and hence candidate matching solutions) can be constructed by shifting the x-axis along the ring, they showed that the optimal matching is achieved by identifying the optimal shift that minimizes the area under the resulting curve. 
For any point $i\in I$, if we had constructed a new cumulative net supply curve starting immediately after $x_i$, this curve is given by $S(x;I) - S(x_{i};I)$. 
The blue dashed line in Figure \ref{periodic} shows an example. 
The optimal matching distance is given by the minimum area under all shifted curves over all points in $I$; i.e., 
\begin{align}
\min_{\{i\in I\}} \sum_{\{i'\in I\}} l\cdot |S(x_{i'};I)- S(x_i;I)|. 
\label{periodic_min}
\end{align}

This can further lead to the following optimality conditions. 
Let $i^*$ denote the optimal solution to Equation \eqref{periodic_min} and let $S(x_{i^*};I) = K \in \mathbb{Z}$. $S(x;I)-K$ represents the optimal cumulative net supply curve whose area equals the minimal total matching distance. 
Any other cumulative net supply curve constructed by changing $i^*$ to a neighboring point $i'$, where $S(x_{i'};I) = K\pm1$, can not decrease the area size. 
When $S(x_{i'};I) = K+1$, the curve $S(x;I) - K$ will be shifted downward by one unit. The area change by such a shift can be computed as the total length of the ring with non-positive net supply minus that with positive net supply. 
This area change is non-negative and the following condition must hold.
\begin{align}
-\sum_{\{i\in I:S(x_i;I)-K>0\}}l + 
\sum_{\{i\in I:S(x_i;I)-K\leq 0\}}l  \geq 0.
\label{periodic_cond_1}
\end{align}
Similarly, if we choose $i'$ such that $S(x_{i'}; I) = K-1$, the area change can be computed as the total ring length with non-negative net supply minus that with negative net supply. Thus, 
\begin{align}
\sum_{\{i\in I:S(x_i;I)-K\geq 0\}}l - 
\sum_{\{i\in I:S(x_i;I)-K< 0\}}l  \geq 0.
\label{periodic_cond_2}
\end{align}

When $n$ is sufficiently large, the expected total ring length with zero net supply tends to zero; i.e.,

\begin{align}
\mathbb{E}\left[\sum_{\{i\in I:S(x;I) - K=0\}}l\right] \xrightarrow{n\to \infty} 0.
\label{periodic_cond_3}
\end{align}
The detailed proof can be found in \ref{apx: optimal}. The intuition behind Equation \eqref{periodic_cond_3} is that the cumulative net supply curve on the ring corresponds to a random walk with $2n$ steps, and the left-hand-side of Equation \eqref{periodic_cond_3} equals the expected number of times the random walk reaches the value of $K$, which diminishes with $n$ asymptotically. 

Then, we take the expectations of Equations \eqref{periodic_cond_1} - \eqref{periodic_cond_2} across random walk realizations. Their difference, after subtracting \eqref{periodic_cond_3}, gives: 
\begin{align}
\mathbb{E}\left[\sum_{\{i\in I:S(x_i;I)-K>0\}}l\right] \xrightarrow{n\to \infty} 
\mathbb{E}\left[\sum_{\{i\in I:S(x_i;I)-K<0\}}l\right].
\label{periodic_cond_4}
\end{align}
It indicates that 
the expected total ring lengths with positive and negative net supplies are (approximately) equal if we draw the cumulative net supply curve based on the optimal solution of Equation \eqref{periodic_min}. Due to symmetry, the expected areas under the curve with positive and negative net supply are also equal. 

This property can be used to estimate the 
the optimal matching distance. 
We can use $B(n/2)$, the expected area size under a $n$-step balanced random walk, to approximate the expected area under the curve with either positive or negative net supplies.
Therefore, the total expected area under the optimal cumulative net supply curve is approximately $2B(n/2)$. Noting $l = \frac{1}{2n}$, the expected average matching distance can be computed as:
\begin{align}
\mathbb{E}[X_{n,n}^\circ] \approx \frac{l}{2n}\cdot2B(n/2) = \frac{1}{4\sqrt{2}}\sqrt{\frac{\pi}{n}}, \quad n\to\infty.
\label{eq: exp_Xnn_periodic}
\end{align}

}

\subsection{Uniform Point Distribution}
\label{subsec: correction}


In this section, we release the two sets of points from the lattice but assume that they 
\textcolor{Black}{are independently and uniformly distributed}
anywhere along the unit-length line segment. Let $X^\text{u}_{m, n}$ denote the average optimal matching distance in this case. 
The distance from each point $i\in I$ to its next neighbor is now a random variable, denoted $l_i, \forall i\in I \setminus \{|I|\}$,  
\textcolor{Black}{
and $\mathbb{E}[l_i] = \frac{1}{n+m+1}, \forall i$.} 
Under such an assumption, Propositions \ref{prop: S_eq_k} and \ref{prop: balanced} still hold, as long as we use $l_i$ to replace $l$ everywhere and slightly adjust the expression of the reduced area size in the proofs; e.g.,
\begin{align*}
& \sum_{\{\forall i \in I, x_{v'_k} \leq x_i < x_v\}} l_i \cdot |S(x_i; I \setminus V' \setminus \{v\} \cup \{v'_k\}) - S(x_i; I\setminus V')| 
=\sum_{\{\forall i \in I, x_{v'_k} \leq x_i < x_v\}} l_i >0.
\end{align*}

The distance formulas for the lattice problem, such as Equations \eqref{eq: exp_Xnn}, \eqref{eq: exp_Xnm} and \eqref{eq: exp_Xnm_upper}, can be used as a first-order approximation. 
However, by ignoring the randomness of $\{l_i\}$, they tend to overestimate the distances, because intuition tells us that the matching distance shall decrease when the points cluster. 
On a lattice, the random walk's step size $l$ 
represents the minimum 
achievable matching distance between any two points. 
For unbalanced lattice problems, as $n\rightarrow \infty$, the estimated optimal matching distance from both Equations \eqref{eq: exp_Xnm} and \eqref{eq: exp_Xnm_upper} should approach $l$ due to the presence of the lattice. 
However, when $\{l_i\}$ varies, those successfully matched points from $V$ are more likely to have shorter distances to their neighbors.

As a result, after removing these unmatched points, the ``true" expected step size of the balanced random walk segments, which contain only the successfully matched points, should be no larger than $l$. 
Therefore, Equations \eqref{eq: exp_Xnn}, \eqref{eq: exp_Xnm} and \eqref{eq: exp_Xnm_upper} may lead to an overestimation of the ``true" optimal matching distance, and the resulting estimation gap is expected to be positively related to the number of unmatched points $n-m$.
{\color{Black}
We formally prove this intuition in the following proposition. 

\begin{prop}
\label{prop: uniform_vs_lattice}
$\mathbb{E}[X_{m,n}^\text{u}] \leq \mathbb{E}[X_{m,n}]$, and the equality holds when $n = m$.
\end{prop}
\begin{proof}
Here we denote the total post-removal area of one problem instance with uniform distribution by adding superscript $\text{u}$, $A^\text{u}(1; I\setminus V')$, and it can be written as:
\begin{align*}
A^\text{u}(1; I\setminus V') = \sum_{\{\forall i\in I \setminus \{|I|\}\}} l_i\cdot |S(x_{i}; I\setminus V')| = \langle\mathbf{l},\mathbf{s}(V')\rangle,
\end{align*}
where $\textbf{l} = (l_i)_{i\in I\setminus\{|I|\}}$ denotes the vector of step sizes; $\textbf{s}(V') = (|S(x_i; I\setminus V')|)_{i\in I\setminus\{|I|\}}$ denotes the vector of the absolute values of post-removal cumulative net supply; and $\langle\cdot, \cdot\rangle$ is the inner product operator. 
Accordingly, the optimal total matching distance is obtained by finding the optimal $V^* \subset V$ that minimizes the total post-removal area:
\begin{align*}
A^\text{u}(1; I\setminus V^*) = \min_{V'\subset V} A^\text{u}(1;I\setminus V') = \min_{V'\subset V} \langle \textbf{l}, \textbf{s}(V')\rangle.
\end{align*}

We will next show that 
$\min_{V'\subset V} \langle \textbf{l}, \textbf{s}(V')\rangle$ is a concave function with respect to \textbf{l}. Suppose $\textbf{l}_1, \textbf{l}_2$ are two arbitrary realized instances of step size vectors. With any $\delta \in [0, 1]$, we have 
\begin{align*}
\min_{V'\subset V} 
\langle \delta \cdot \textbf{l}_1 + 
(1-\delta) \cdot \textbf{l}_2, 
\textbf{s}(V')
\rangle
&=
\min_{V'\subset V} \left[
\delta\cdot \langle \textbf{l}_1, 
\textbf{s}(V')
\rangle
+
(1-\delta) \cdot \langle \textbf{l}_2, 
\textbf{s}(V')
\rangle \right]\\
&\geq
\delta\cdot \min_{V'\subset V} \langle \textbf{l}_1, 
\textbf{s}(V')
\rangle
+
(1-\delta) \cdot \min_{V'\subset V} \langle \textbf{l}_2, 
\textbf{s}(V')
\rangle.
\end{align*}
Hence, concavity holds. Per Jensen's inequality, we have 
\begin{align*}
\mathbb{E}[X_{m,n}^\text{u}] = \mathbb{E}\left[\min_{V'\subset V} \langle \textbf{l}, 
\textbf{s}(V')\right]
\leq
\min_{V'\subset V} \langle \mathbb{E}[\textbf{l}], 
\textbf{s}(V')
\rangle  =\mathbb{E}[X_{m,n}].
\end{align*}

In the special case when $n = m$, there is no excessive supply points to be removed, and we have the first equality of the following:
\begin{align*}
\mathbb{E}[A^\text{u}(1; I)] &=\mathbb{E}\left[\sum_{\{\forall i\in I \setminus \{|I|\}\}} l_i\cdot |S(x_{i}; I)|\right]
=\mathbb{E}[l_i] \cdot \mathbb{E}\left[\sum_{\{\forall i\in I \setminus \{|I|\}\}} |S(x_{i}; I)|\right]\\ 
&= l \cdot \mathbb{E}\left[\sum_{\{\forall i\in I \setminus \{|I|\}\}} |S(x_{i}; I)|\right]
= \mathbb{E}\left[\sum_{\{\forall i\in I \setminus \{|I|\}\}} l\cdot |S(x_{i}; I)|\right] = \mathbb{E}\left[A(1;I)\right].
\end{align*}
The second equality holds because $l_i$ is independent of $|S(x_i;I)|$ for each $i$, while the rest are simple algebraic manipulations. 
This completes the proof.
\end{proof}
}


It is non-trivial to estimate this gap explicitly. Therefore, we introduce an approximate correction term derived based on the case when $n\gg m$. 
Recall 
\cite{shen_zhai_ouyang_2024} proposed a set of formulas and asymptotic approximations for estimating the matching distance of RBMPs in any dimensions. Specifically for the one-dimensional case, they have:
\begin{align}
\label{eq: general_model}
&\mathbb{E}[X^{\text{u}}_{m,n}] 
\xrightarrow{n\gg m} \frac{1}{2n}. 
\end{align}
Thus, when $n \gg m$, the gap between the estimations given by Equations \eqref{eq: exp_Xnm} or \eqref{eq: exp_Xnm_upper} and the one given by Equation \eqref{eq: general_model} is simply 
\textcolor{Black}{
$l - \frac{1}{2n} = \frac{n-m+1}{2n(m+n+1)}$. 
Note that when $m<n$, this gap is positive, and when $m=n$, this gap converges to zero, which is consistent with the property stated in Proposition \ref{prop: uniform_vs_lattice}. }
We will use this as an approximate correction term and subtract it from the estimates provided by both Equations \eqref{eq: exp_Xnm} and \eqref{eq: exp_Xnm_upper} across all $m$ and $n$ values.  
The final formulas for estimating $\mathbb{E}[X^\text{u}_{m,n}]$ after applying the correction are as follows:
\begin{align}
\label{eq: exp_Xnm_corrected}
\mathbb{E}[X^\text{u}_{m,n}] 
&\approx \left[\frac{n-m+1}{m(m+n)}\cdot 
\sum_{m'=0}^{m} \frac{\binom{n-m'-1}{n-m-1}}{\binom{n}{n-m}} \cdot \frac{m'2^{2m'-1}}{\binom{2m'}{m'}}\right] - 
{\color{Black}
\frac{n-m+1}{2n(m+n+1)}},\\
\label{eq: exp_Xnm_upper_corrected}
&\mathbb{E}[X^\text{u}_{m,n}] \approx \left[\frac{1}{m}\cdot \mathbb{E}[\hat{Z}_{0, m}]\right] - 
{\color{Black}
\frac{n-m+1}{2n(m+n+1)}},
\end{align}
where $\mathbb{E}[\hat{Z}_{0, m}]$ is given by Equation \eqref{eq: exp_area_tail_swap}.

\subsection{Arbitrary-length Line}
\label{subsec: scaling}
This section builds upon Section \ref{subsec: correction} but further 
allows the line segment to have an arbitrary length of $L$ distance units (du). 
In addition, we introduce point densities (per du) $\mu$ and $\lambda$, where $\mu \le \lambda$, such that $m = \mu L$ and $n = \lambda L$. 
The average distance between any two adjacent points $l=\frac{1}{\lambda+\mu}$ du. 
The average optimal matching distance in this case, denoted as $X_{\text{E}}$ for an ``edge", should be governed by three parameters: $\mu$, $\lambda$, and $L$. 

We will consider a few possible cases. For the balanced case $(\lambda=\mu)$, 
we can simply substitute $n = \lambda L$ 
and $l=\frac{1}{2\lambda}$ 
into Equations \eqref{eq: exp_Xnn}, which gives an updated expected matching distance as: 
\begin{align}
\label{eq: exp_Xnn_scale}
\mathbb{E}[X_{\text{E}}] 
= \frac{l\cdot B(\lambda L)}{\lambda L}
= \frac{2^{2\lambda L-1}}{2\lambda \binom{2\lambda L}{\lambda L}} 
\xrightarrow{\lambda L\gg 1} \frac{1}{4}\sqrt{\frac{\pi L}{\lambda }}, 
\quad \text{if } 
\lambda =\mu.
\end{align}
It can be observed from this formula that the expected matching distance  scales with $\sqrt{L}$.
For the highly unbalanced cases $(\lambda \gg \mu)$, 
we can scale the asymptotic approximation in Equation \eqref{eq: general_model} by multiplying $L$ and substitute $n = \lambda L$, which gives the following:
\begin{align}
\label{eq: exp_general_scale}
\mathbb{E}[X_{\text{E}}] 
\approx \frac{L}{2\lambda L} = \frac{1}{2\lambda}, 
\quad \text{if } 
\lambda \gg \mu.
\end{align}
In this case, the formula shows that the expected matching distance is independent of $L$. 
For other unbalanced cases ($\lambda \gtrapprox  \mu$), we may substitute $n = \lambda L$, $m = \mu L$,  $l=\frac{1}{\lambda+\mu}$, and the correction term $l - \frac{1}{2\lambda}$ into Equations \eqref{eq: exp_Xnm_corrected}-\eqref{eq: exp_Xnm_upper_corrected}, which leads to the following: 
\begin{align}
\label{eq: exp_XE_corrected}
\mathbb{E}[X_{\text{E}}] 
&\approx \frac{\lambda L -\mu L+1}{\lambda+\mu}\cdot \sum_{m'=0}^{\mu L} \text{Pr}\{m_0=m'\} \cdot B(m') - \frac{\lambda-\mu}{2\lambda(\mu+\lambda)}, \quad \text{if } 
\lambda \gtrapprox  \mu,  \text{ or }\\
\label{eq: exp_XE_upper_corrected}
&\mathbb{E}[X_{\text{E}}] \approx \frac{1}{\mu L}\cdot \mathbb{E}[\hat{Z}_{0, \mu L}] - \frac{\lambda-\mu}{2\lambda(\mu+\lambda)}, \quad \text{if } 
\lambda \gtrapprox \mu.
\end{align}
These formulas do not directly tell how the expected matching distance scales with $L$. However, our numerical experiments show 
that when $\frac{\lambda}{\mu} \approx 1$, the distance formula behaves more similarly to the balanced case and increases monotonically with $\sqrt{L}$. As $\frac{\lambda}{\mu}$ increases, the formula value quickly converges to a fixed value, regardless of $L$. 

The logic behind this somewhat counter-intuitive property is that when one subset of vertices is dominating over the other, the vertices in the dominated subset are more likely to find matches locally; i.e., they only interact with the points nearby, so the expected matching distance does not increase with the length of the line. However, when the numbers of vertices in both subsets are similar, especially when they are equal, the optimal point matches tend to be found globally and they are less independent of one another; i.e., some points need to be matched with other points across the entire line. This is exactly the ``correlation" issue that was discussed in \citet{mezard_euclidean_1988}. 
Further discussion on the scaling properties of the expected matching distance in higher-dimension continuous spaces can be found in \citet{shen_zhai_ouyang_2024}. 


\section{Network RBMP Estimators} 
\label{sec: network model}
Now we are ready for the matching problem in a network.  
Let $\text{G} = (\mathcal{V}, \mathcal{E})$ be an undirected graph with node set $\mathcal{V}$ and edge set $\mathcal{E}$.  
On each edge $e \in \mathcal{E}$, the 
point subsets $U_e$ and 
$V_e$ are generated according to homogeneous Poisson processes, 
where $m_e=|U_e|$ and $n_e=|V_e|$ are now random variables. 
Matching is now conducted between the two sets of points on all edges: $U = \bigcup_{e\in \mathcal{E}} U_e$ and $V = \bigcup_{e\in \mathcal{E}} V_e$. 
The average optimal matching distance in this case, denoted as $X_{\text{G}}$ for a ``graph", 
should be influenced by the graph's topology. 
In this paper, we focus on a special type of graphs with the following properties: (i) all nodes in $\mathcal{V}$ have the same degree, $D$, and hence the graph is $D$-regular; 
(ii) all edges in $\mathcal{E}$ have the same length $L$; and (iii) the Poisson densities, $\mu$ and $\lambda$, are respectively identical across all edges.
Note that nodes and edges can be distributed in 2, 3- or any higher dimensional space.
Since all edges are translationally symmetric, we can start from one arbitrary edge, and study how $X_{\text{G}}$ is determined by four key parameters: $\mu$, $\lambda$, $L$ and $D$. 
Here, we further focus on graphs with $D\geq 3$, because the graph with $D=2$ reduces to a ring which has been studied in Section \ref{subsec: periodic}.

We first categorize the matches occurring on an edge $e$ into two types. 
We say an arbitrary point $u\in U_e$ is ``locally" matched if its corresponding match point $v$ is on the same edge, with matching distance $X_{\text{G}}^\text{l}$; otherwise, it is ``globally" matched, with matching distance $X_{\text{G}}^\text{g}$.
Let $\alpha$ represent the probability for global matching, then from 
the law of total expectation, 
the expected distance $\mathbb{E}[X_{\text{G}}]$ can be expressed as follows:

\begin{align}
\label{eq: X_G}
\mathbb{E} [X_{\text{G}}] = 
(1-\alpha)\cdot \mathbb{E}[X_{\text{G}}^{\text{l}}] + \alpha\cdot \mathbb{E}[X_{\text{G}}^\text{g}].  
\end{align}
We approximate the local matching distance $\mathbb{E}[X_{\text{G}}^\text{l}]$ from Equations \eqref{eq: exp_Xnn_scale}-\eqref{eq: exp_general_scale} as if it were from an arbitrary-length line under parameters $\mu$, $\lambda$, $L$, i.e.,
\begin{align}
&\mathbb{E}[X_{\text{G}}^\text{l}] 
\approx 
\mathbb{E}[X_{\text{E}}].
\end{align}

Next, to estimate $\alpha$ and $\mathbb{E}[X_{\text{G}}^\text{g}]$, we propose a feasible process that is expected to generate a reasonably good matching solution for every realization. 
It prioritizes local matching and works as follows. 
For each edge $e\in \mathcal{E}$, if $n_e \ge m_e$, we match all the $m_e$ points in $U_e$ with those in $V_e$ as if they were in an isolated line segment. 
If $n_e < m_e$, we select $n_e$ points from $U_e$ that are closer to the middle of the edge and match them with all the points in $V_e$. 
The remaining $m_e - n_e$ unmatched points from $U_e$ 
will seek global matches. 
We denote $U_e^+ \subseteq U_e$ and $V_e^+ \subseteq V_e$, as the remaining point sets on edge $e$ after the above local matching process, respectively. For each edge $e$, exactly one of these two sets will be empty, and the other set will be concentrated near the ends of the edge.

As such, we estimate $\alpha$ by the expected fraction of globally matched points in $U_e$. 
Global matching can happen to a point in $U_e$ only when $m_e>n_e$, and hence $\alpha$ can be estimated by the conditional expectation as the following:
\begin{align}
\label{eq: alpha}
\alpha 
\approx \frac{1}{\mu L} \cdot \text{Pr}\{m_e > n_e\} \cdot \mathbb{E}[m_e-n_e \mid m_e > n_e].
\end{align}
We see that $m_e - n_e$ is the difference between two Poisson random variables, which follows a Skellam distribution and 
can be approximated by a normal distribution with mean $(\mu-\lambda)L$ and variance $(\lambda+\mu)L$.
As such, we have: 
\begin{align}
\label{eq: prob_n_less_m}
&\text{Pr}\{m_e>n_e\} 
\approx \Phi\left(\frac{-\frac{1}{2}+(\mu-\lambda)L}{\sqrt{(\lambda+\mu)L}}\right),
\end{align}
where $\Phi(\cdot)$ is the cumulative distribution function of standard normal distribution.
Then, the conditional expectation is:
\begin{align}
\label{eq: cond_exp_n_less_m}
&\mathbb{E}[m_e-n_e \mid m_e > n_e]
= (\mu-\lambda)L + \sqrt{(\lambda+\mu)L} \cdot \frac{ \phi\left( \frac{-\frac{1}{2}+(\lambda - \mu)L}{\sqrt{(\lambda+\mu)L}} \right)}{1-\Phi \left( \frac{-\frac{1}{2}+(\lambda - \mu)L}{\sqrt{(\lambda+\mu)L}} \right)},
\end{align}
where $\phi(\cdot)$ is the probability density function of the standard normal distribution. Then, $\alpha$ is obtained by plugging Equations \eqref{eq: prob_n_less_m} and \eqref{eq: cond_exp_n_less_m} into Equation \eqref{eq: alpha}. 
Similarly, by symmetry, 
\begin{align}
\label{eq: prob_m_less_n}
&\text{Pr}\{n_e > m_e\} 
\approx \Phi\left(\frac{-\frac{1}{2}+(\lambda - \mu)L}{\sqrt{(\lambda+\mu)L}}\right), \\
\label{eq: cond_exp_n_larger_m}
&\mathbb{E}[n_e-m_e \mid n_e > m_e]
= (\lambda-\mu)L + \sqrt{(\lambda+\mu)L} \cdot \frac{ \phi\left( \frac{-\frac{1}{2}+(\mu-\lambda)L}{\sqrt{(\lambda+\mu)L}} \right)}{1-\Phi \left( \frac{-\frac{1}{2}+(\mu-\lambda)L}{\sqrt{(\lambda+\mu)L}} \right)}.
\end{align}
Now, all that is left is to derive an estimate of $\mathbb{E}[X_{\text{G}}^\text{g}]$. In so doing, for all unmatched point $u\in U_e^+, \forall e\in \mathcal{E}$, we perform the following breadth-first search procedure throughout the network (as illustrated in Figure \ref{fig: nns}) to identify a feasible match globally:  
(i) find the nearer end of edge $e$ from $u$, denoted as $o_0$. Identify the layer of edges, $N_k$, whose nearer end is $kL$ distance away from $o_0$, for all $k = 0,1,\cdots $.
(ii) starting from $k=0$, check whether there exists any edge $e' \in N_k$ such that $V_{e'}^+ \ne \emptyset$. If yes, match $u$ with the nearest point $v \in \bigcup_{e' \in N_k} V_{e'}^+$ (e.g., shown as the labeled red circle in Figure \ref{fig: nns}), mark the edge containing $v$ as $e^*$ 
and the nearer end of $e^*$ (to $o_0$) as $o_{k}$. 
Repeat (i) and (ii) until all points in $U_e^+, \forall e\in \mathcal{E}$ have found a match, or all points in $V_e^+, \forall e\in \mathcal{E}$ have been used for a match.

\begin{figure}[ht!] 
\centering 
\includegraphics[scale=0.75]{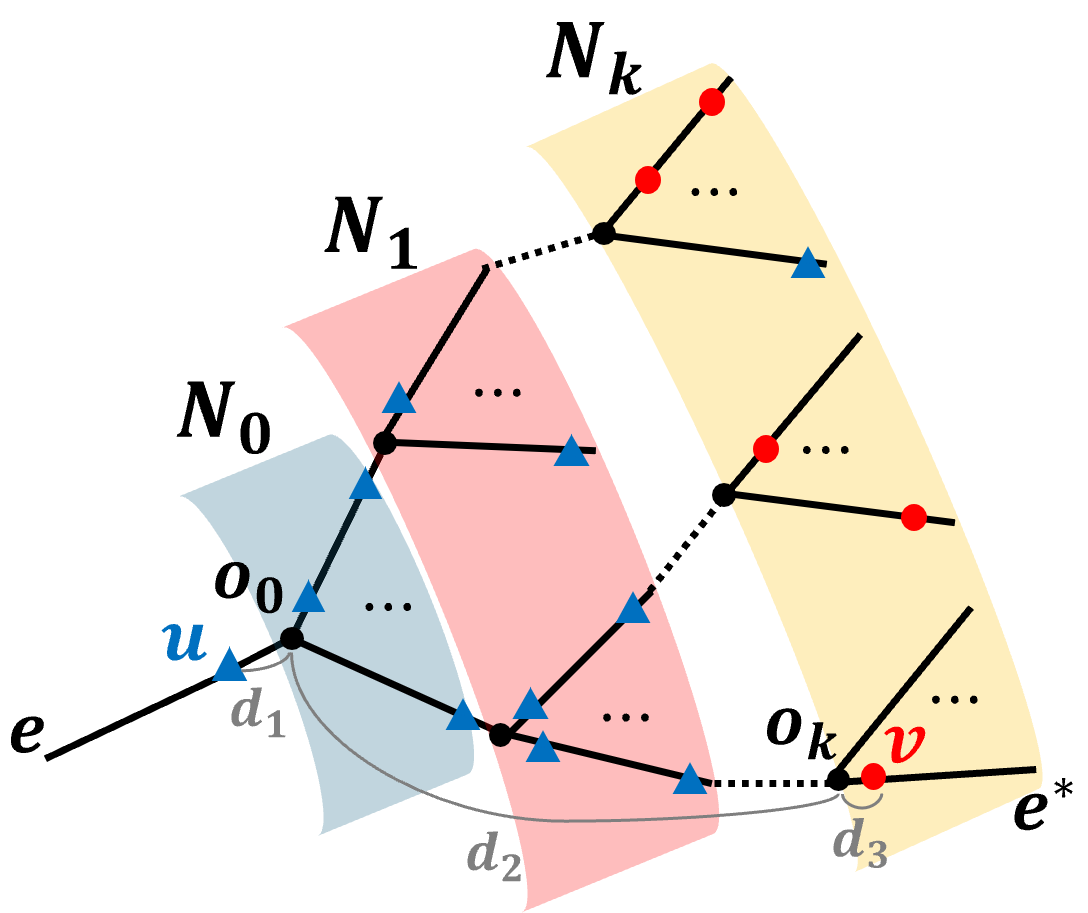}
\caption{Illustration of the breadth-first search procedure.}
\label{fig: nns}
\end{figure}

According to the above matching process, the distance between a specific $u \in U_e^+$ and its match $v \in V_{e^*}^+$ consists of three parts (as indicated by the three gray curves in Figure \ref{fig: nns}): (i) the distance from $u$ to $o_0$ on edge $e$; (ii) the distance from $o_0$ to $o_{k}$, where $e^* \in N_k$; (iii) the distance from $o_{k}$ to $v$ on edge $e^*$. 
Let random variables $d_1$, $d_2$ and $d_3$ represent these three distances, and $\mathbb{E}[X_{\text{G}}^\text{g}]$ can be written as the sum of their individual expectations; i.e., 
\begin{align}
\label{eq: exp_XG_d1d2d3}
\mathbb{E}[X_{\text{G}}^\text{g}] &= \mathbb{E}[d_1]+ \mathbb{E}[d_2] + \mathbb{E}[d_3]. 
\end{align}

First, we look at 
$d_2$. 
The probability of finding a match in the $k$-th layer should be no larger than the probability for at least one edge $e' \in N_k$ to have $n_{e'} > m_{e'}$. 
Since all edges are translationally symmetric, $\text{Pr}\{n_{e'} > m_{e'}\} = \text{Pr}\{n_{e} > m_{e}\}$. 
The probability of finding a match in the $k$-th layer is:
\begin{align}
1-(1- \text{Pr}\{n_{e} > m_{e}\})^{|N_k|},\quad k = 0, 1, \cdots.
\end{align}
We would have $d_2 = kL$ (for $k = 0,1, 2, \cdots$) if a match is successfully found in the $k$-th layer, but not in any previous layers; hence, for $k = 0, 1, 2, \cdots$: 
\begin{align}
\label{eq: pr_d2_kL}
&
\text{Pr} \{d_2 = kL\}
\approx 
(1- \text{Pr}\{n_e > m_e\})^{\sum_{i=0}^{k-1}|N_i|}\cdot [1-(1- \text{Pr}\{n_e > m_e\})^{|N_k|}].
\end{align}

Note that the definition of edge layers $\{N_k\}$ directly captures the presence and density of edge cycles in the network, and the
exact cardinality of each layer $k$, $|N_k|$, can be easily counted for any given 
topology. 
\footnote{
One can also estimate $|N_k|$ recursively across successive layers using the edge-expansion ratio of a graph, which measures the relative number of edges crossing a graph partition between two subsets of vertices as compared to the number of vertices in the smaller partition. 
\citet{ellis2011expansion} shows that a random $D$-regular graph's edge-expansion ratio almost surely approaches a lower bound of $\frac{D}{2}$ as $D\to\infty$.
}
Given each node has an equal degree $D$ in the current network, we have $|N_k| \le (D-1)^{k+1}, \forall k$, in general, and $|N_k| \approx (D-1)^{k+1}$ when $k$ is relatively small. 
Since the first term in Equation \eqref{eq: pr_d2_kL} quickly approaches zero as $k$ increases, while the second term approaches a constant, only the first few searches with small $k$ matters. As such, $|N_k| = (D-1)^{k+1}$ can serve as a good approximation in practice. A comparison between the exact and approximated values for $|N_k|$ is provided in the numerical experiment section.
$\mathbb{E}[d_2]$ can be approximated by the following truncation: 
\begin{align}
&\mathbb{E}[d_2] \approx \sum_{k=0}^{\kappa}kL \cdot \text{Pr} \{d_2 = kL\},
\end{align} 
where $\kappa$ is a relative small value (e.g., about 10).

Next, we look at $d_1$ and $d_3$. 
Recall that all unmatched points in $U_e^+$, if any, after local matching, will be located near the two ends of edge $e$. 
The expected number of unmatched points near each end is $\frac{\mathbb{E}[|U_e^+|]}{2}=\frac{1}{2}\mathbb{E}[m_e-n_e \mid m_e > n_e]$. The average distance between two adjacent points among them is $\frac{1}{\mu}$.
Then, the distance between an arbitrary point $u \in U_e^+$ and its nearer end $o_0$, i.e., $d_1$, is approximately uniformly distributed in the interval $\left( 0, \frac{1}{2\mu}\mathbb{E}[m_e-n_e \mid m_e > n_e] \right)$, and hence the average, $\mathbb{E}[d_1]$, is approximately equal to half of the interval length, i.e.,
\begin{align}
\mathbb{E}[d_1] 
\approx \frac{1}{4\mu} \cdot \mathbb{E}[m_e-n_e \mid m_e > n_e],
\end{align}
where $\mathbb{E}[m_e-n_e \mid m_e > n_e]$ is given by Equation \eqref{eq: cond_exp_n_less_m}.
The derivation of $\mathbb{E}[d_3]$ is similar, but through symmetrical analysis of the unmatched points in $V_{e^*}^+$ where $e^* \in N_k$. The distance between $o_k$ and an arbitrary unmatched point in $V_{e^*}^+$ near $o_k$ is approximately uniformly distributed in the interval $\left( 0, \frac{1}{2\lambda} \mathbb{E}[n_{e^*}-m_{e^*} \mid n_{e^*} > m_{e^*}]\right)$, and again, $\mathbb{E}[n_{e^*}-m_{e^*} \mid n_{e^*} > m_{e^*}] = \mathbb{E}[n_e-m_e \mid n_e > m_e]$. However, competition may occur, and not all points in $V_{e^*}^+$ must be matched. 
From the perspective of a specific point $u\in U_e^+$, during the breadth-first search process, it will be matched with the nearest available point $v \in \bigcup_{e' \in N_k} V_{e'}^+$. 
However, 
other competing points in $\bigcup_{e\in \mathcal{E}} U_e^+$
(as indicated by the other blue triangles in Figure \ref{fig: nns}) may also have the chance to be matched with the points in $\bigcup_{e' \in N_k} V_{e'}^+$ (shown as the red circles in Figure \ref{fig: nns}). 
The expected ratio between the total number of competing points of $u$ and the total number of available points in $\bigcup_{e' \in N_k} V_{e'}^+$ is approximately $\frac{|U|}{|V|}=\frac{\mu}{\lambda}$. 
This indicates that at the end of the breadth-first search process, $\frac{\mu}{\lambda}$ of the points in $\bigcup_{e' \in N_k} V_{e'}^+$ will be matched. 
Since $e^* \in N_k$, the distance between $o_k$ and an arbitrary matched point $v \in V_{e^*}^+$ near $o_k$, i.e., $d_3$, is approximately uniformly distributed in the interval $\left( 0, \frac{\mu}{2\lambda^2} \mathbb{E}[n_{e}-m_{e} \mid n_{e} > m_{e}]\right)$, and hence the average, $\mathbb{E}[d_3]$, can be approximately estimated as follows: 
\begin{align}
\label{eq: d3}
\mathbb{E}[d_3] 
\approx \frac{\mu}{4\lambda^2} \cdot \mathbb{E}[n_e - m_e \mid n_e>m_e],
\end{align}
where $\mathbb{E}[n_e-m_e \mid n_e > m_e]$ is given by Equation \eqref{eq: cond_exp_n_larger_m}. 

Summarizing all the above, $\mathbb{E}[X_{\text{G}}]$ can be estimated out of Equations \eqref{eq: X_G}-\eqref{eq: d3}.

\section{Numerical Experiment}
\label{sec: numerical}

\subsection{Verification of 1D RBMP on A Lattice}

In this section, we validate the accuracy of the proposed formulas for 1D RBMP on a lattice using a series of Monte-Carlo simulations.
For each combination of $n$ and $m$ values, 500 RBMP realizations are randomly generated. Each realized instance is solved by a customized algorithm, as detailed in \ref{apx: DP}.
The average optimal matching distances for each $(m,n)$ combination is recorded as the sample mean across the 500 realizations. 

Figure \ref{fig: numerical} compares the simulation results with the formulas developed for both balanced and unbalanced cases, including Equations \eqref{eq: exp_Xnn}, \eqref{eq: exp_Xnm}, and \eqref{eq: exp_Xnm_upper}. 
The distance estimates are rescaled by an asymptotic scaling factor --- $\sqrt{n}$ and $n+m$ for balanced and unbalanced cases, respectively. 
The sample means of optimal matching distances solved for each instance from the Monte-Carlo simulation is represented by the red solid curve with square markers. 
The estimations from Equations \eqref{eq: exp_Xnn}, \eqref{eq: exp_Xnm}, \eqref{eq: exp_Xnm_upper} are marked by the blue dashed curves, the blue dashed curves with cross markers, and the green dot-dash curves with plus markers, respectively.  


\begin{figure}[ht!] 
\centering 
\includegraphics[scale=0.65]{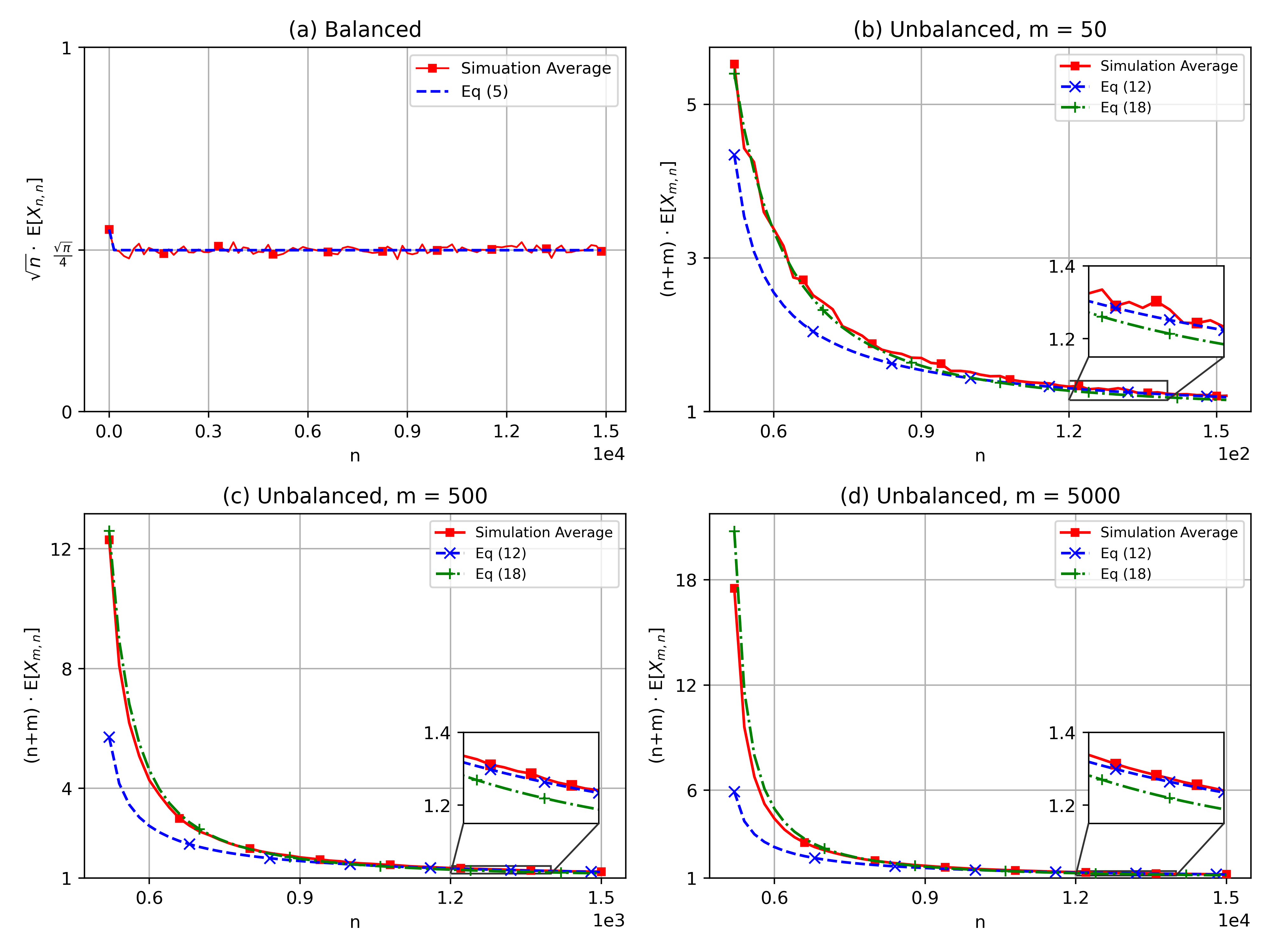}
\caption{Verification of 1D RBMP on a lattice.}
\label{fig: numerical}
\end{figure}

For the balanced cases, we let the value of $n = m$ vary from 1 to 15000. 
Figure \ref{fig: numerical} (a) shows the results. 
It can be seen that the estimations by Equation \eqref{eq: exp_Xnn} closely match with the simulation averages, 
with an average relative error of 1.62\%. 
This indicates that Equation \eqref{eq: exp_Xnn} provides very accurate estimates. 
It can also be observed that the simulated distance quickly converges to $\frac{\sqrt{\pi}}{4}$ as $n$ increases, which is consistent with our analytical predictions. 

Next, for the unbalanced cases, we set $m\in\{50, 500, 5000\}$, and let $n$ range from $m+1$ to a sufficiently large number $3m$. 
Figures \ref{fig: numerical} (b)-(d) show the results. 
It can be first observed that the estimations from Equation \eqref{eq: exp_Xnm_upper} closely match with the simulation averages across all $n$ and $m$ values. The average relative errors are 3.83\%, 3.89\%, and 5.18\% for $m=50, 500, 5000$, respectively. This indicates that, in general, Equation \eqref{eq: exp_Xnm_upper} can provide very accurate distance predictions. 
We then look at the estimations from Equations \eqref{eq: exp_Xnm}. 
It is clear that, when $n\gg m$, the equation also matches quite well with the simulation averages. 
Specifically, for $n \geq 2m$, the average relative errors for Equation \eqref{eq: exp_Xnm} are 3.17\%, 2.92\%, and 2.86\%, for $m=50, 500, 5000$, respectively. 
When $n\rightarrow m$, larger discrepancies can be observed. 
Recall from Section \ref{subsec: stars_and_bars}, this discrepancy may arise from the i.i.d assumption for point selections in $V^*$. 
Observations from various $V^*$ instances show that, when $n \approx m$, points at certain specific positions (e.g., the first or the last point when $n = m + 1$) are more likely to be selected in $V^*$ than the others. 
Nevertheless, Equation \eqref{eq: exp_Xnm} provides very good estimates when $n \ge 2m$. 
In addition, the distances in all cases converge to 1 as $n \rightarrow \infty$, as expected.

In summary, one can choose the most suitable formula given the specific problem setup and the required accuracy. For balanced cases, Equation \eqref{eq: exp_Xnn} should be used. For unbalanced cases, when $n \gg m$, Equation \eqref{eq: exp_Xnm} is recommended as it can already provide a good estimation and is computationally more efficient than Equation \eqref{eq: exp_Xnm_upper}; otherwise, Equation \eqref{eq: exp_Xnm_upper} is more suitable as it provides the most accurate estimates.

\subsection{Verification of 1D RBMP Variants}
In this section, we validate the accuracy of the proposed formulas for three 1D RBMP variants.
For any parameter combination, 500 RBMP instances are randomly generated from Monte-Carlo simulations. Instances with periodic boundaries or uniform point distribution are solved through customized algorithms described in \ref{apx: DP}. Instances with arbitrary-length line are solved through Equation \eqref{eq: def}-\eqref{eq: def_ctd} by a standard linear program solver Gurobi.
The optimal matching distances are then averaged across the 500 realizations.

We first try balanced cases with periodic boundaries. Let the value of $n = m$ vary from 1 to 15000. Figure \ref{fig: variant}
(a) compares the simulated average distances, represented by the red solid curves with square markers, with predictions from Equation \eqref{eq: exp_Xnn_periodic}, represented by the blue dashed curves.
All estimates are rescaled by a factor of $\sqrt{n}$. 
It can be seen that the predictions by Equation \eqref{eq: exp_Xnn_periodic} closely match with the simulation averages. 
When $n$ and $m$ are considerably small, Equation \eqref{eq: exp_Xnn_periodic} tends to underestimate the simulation average. However, this error diminishes rapidly as $n$ increases. For instance, the relative error is 29.2\% when $n=1$, but drops significantly to 2.80\% when $n=3$. 
When $n$ is sufficiently large (e.g., $n > 10$), Equation \eqref{eq: exp_Xnn_periodic} can provide a very accurate prediction. 

Next, we study cases with uniformly distributed points. We set $m=5000$, and let $n$ range from $m+1$ to a sufficiently large number $3m$. 
Simulation averages and formula predictions are rescaled by a factor of $n+m$. 
Figure \ref{fig: variant} (b) shows the results. 
Similar to the cases on a lattice, the predictions from Equation \eqref{eq: exp_Xnm_upper_corrected} (the green dot-dash curves) closely match with the simulation averages (the red solid curve with square markers), with an average relative error 8.01\%. 
This indicates that, in general, Equation \eqref{eq: exp_Xnm_upper_corrected} can provide very accurate distance predictions. 
In contrast, Equation \eqref{eq: exp_Xnm_corrected} (the blue dashed curve) matches quite well with the simulation averages when $n\gg m$, with the average relative error 6.57\%. Yet, it yields a larger error for small $n$ values 
due to the i.i.d assumption for point selections in $V^*$ as discussed for Equation \eqref{eq: exp_Xnm}. 

Last, for the cases in an arbitrary-length line, we fix $\mu$ but vary $L$ and $\lambda$. Figure \ref{fig: variant} (c) compares the simulated average distances, 
represented by markers, with predictions from Equations \eqref{eq: exp_Xnn_scale} (for $\lambda/\mu=1$) and \eqref{eq: exp_XE_upper_corrected} (for $\lambda/\mu \in \{1.1, 1.5, 3\}$), represented by lines. The line length $L$ varies from $\{1,3,5,7,9\}$ [du], and $\mu = 10$ [1/du].
It can be observed that, in general, the estimations by Equations \eqref{eq: exp_Xnn_scale} and \eqref{eq: exp_XE_upper_corrected} fit tightly with the simulation averages across all $L$ and $\lambda/\mu$ values. The average relative errors are 2.50\%, 3.22\%, 1.51\%, and 8.12\% for $\lambda/\mu = 1,1.1,1.5, 3$, respectively. Also, it is clear that 
the optimal matching distance increases concavely with $L$ when $\lambda/\mu = 1$. Yet, as the ratio $\lambda/\mu$ becomes slightly larger, both the simulation averages and the formula estimations become flatter. 
At higher values of $\lambda/\mu$, such as $1.5$ or $3$, the optimal matching distance barely changes with $L$. 
These observations support the discussion in Section \ref{subsec: scaling}, and visually illustrates how the average optimal matching distance scales with $\sqrt{L}$ in the balanced RBMP, but is largely independent of $L$ in the unbalanced RBMP with $\lambda \gg \mu$. 

\begin{figure}[ht!] 
\centering 
\includegraphics[width = 1\textwidth]{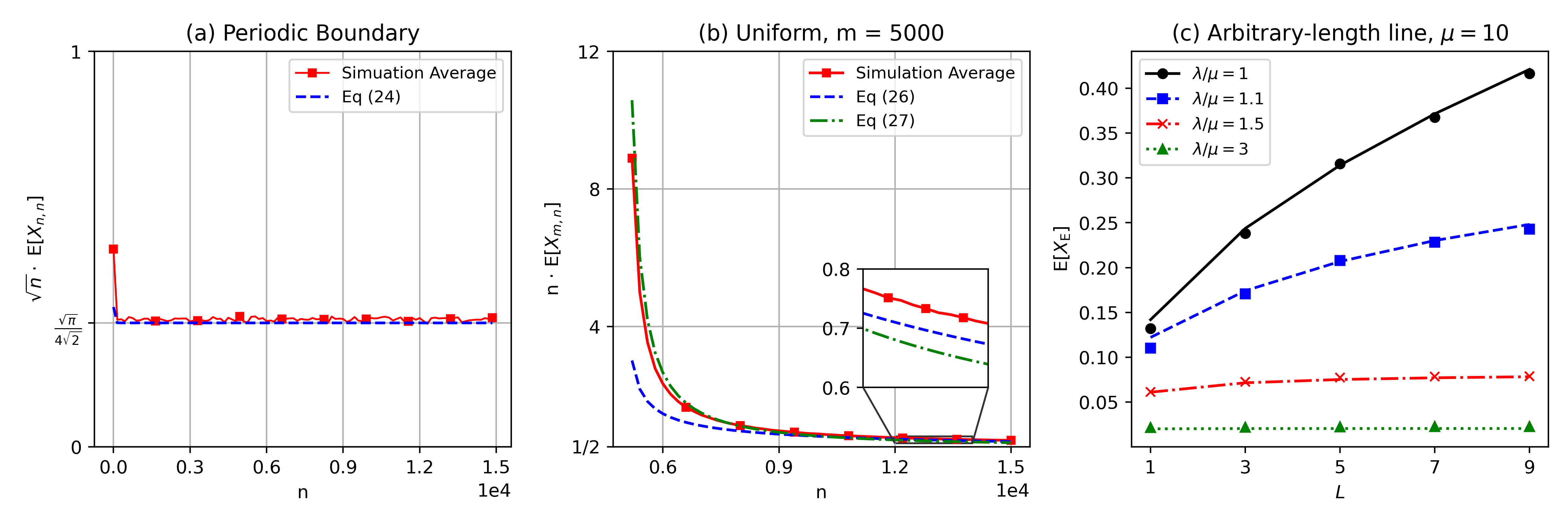}
\caption{Verification of 1D RBMP variants.}
\label{fig: variant}
\end{figure}

{
\color{Black}
\subsection{Time Complexity Comparison for 1D RBMP Estimators}
While the proposed formulas have been shown to be accurate, it is instructive to compare their computational performance against that of conventional simulation-based numerical estimates. 
In this section, we compare the time complexity and accuracy of computing the proposed formulas with the 
estimates from Monte-Carlo simulations across different sample sizes. 
We report findings on 1D RBMP on a lattice with open boundaries, which can well represent the computational performance in other problem settings. 

Recall that the three formulas for 1D RBMP on a lattice have the following computational time complexities: 
(i) Equation \eqref{eq: exp_Xnn}, which applies to the balanced case ($m = n$), has a constant time complexity $\mathcal{O}(1)$; (ii) Equation \eqref{eq: exp_Xnm}, which applies when the supply is much greater than the demand ($n \gg m$), has a linear time complexity $\mathcal{O}(m)$; and (iii) Equation \eqref{eq: exp_Xnm_upper}, which applies to the general unbalanced case ($m \neq n$), has a polynomial time complexity: $\mathcal{O}((n-m)m^2)$ for all $m \times n$ cases. 
In contrast, 
for simulation-based estimation, a standard algorithm for solving one single matching problem instance has a worst-case time complexity of $\mathcal{O}(n^3)$. 
As such, if we only make the comparison in terms of these worst-case time complexities, solving one simulated problem instance already incurs a higher or equal complexity as compared to any of the proposed formulas. Solving a larger sample of instances (to obtain reliable numerical estimates) will obviously lead to an even greater worst-case computational burden.

In terms of average complexity, we numerically compare the actual (average) running times of the formulas under different parameter combinations $(m, n)$ with those of simulation-based estimations across different sample sizes, denoted by $N$. 
With proper treatment for a given problem instance, such as an informed initialization or cost scaling, 
the average running time to solve one instance can be lower. 
In so doing, we apply a state-of-art method, the modified Jonker–Volgenant algorithm \citep{Scipy}, to solve each simulated instance. 

Here, as an illustration, 
we examine four parameter combinations with a fixed $m = 50$ and varying $n \in \{ m, 1.5m, 2m, 6m\}$, representing the balanced, nearly balanced, unbalanced, and highly unbalanced cases, respectively. 
We also vary $N \in \{10, 10^2, 10^3, 10^4, 10^5\}$. 
For each $(n,N)$ combination, we conduct 100 simulation runs. In each run, we generate $N$ RBMP instances, solve the matchings, and record the average optimal matching distance. 
We also record the mean absolute percentage error (MAPE) and the algorithm running time for each simulation run, and report the sample mean of these values over 100 runs for each $(n,N)$ combination. 
The ``ground-truth" for a given $n$ is assumed to be the sample mean over $10^6$ problem instances.

\begin{figure}
    \centering
    \includegraphics[width=1\linewidth]{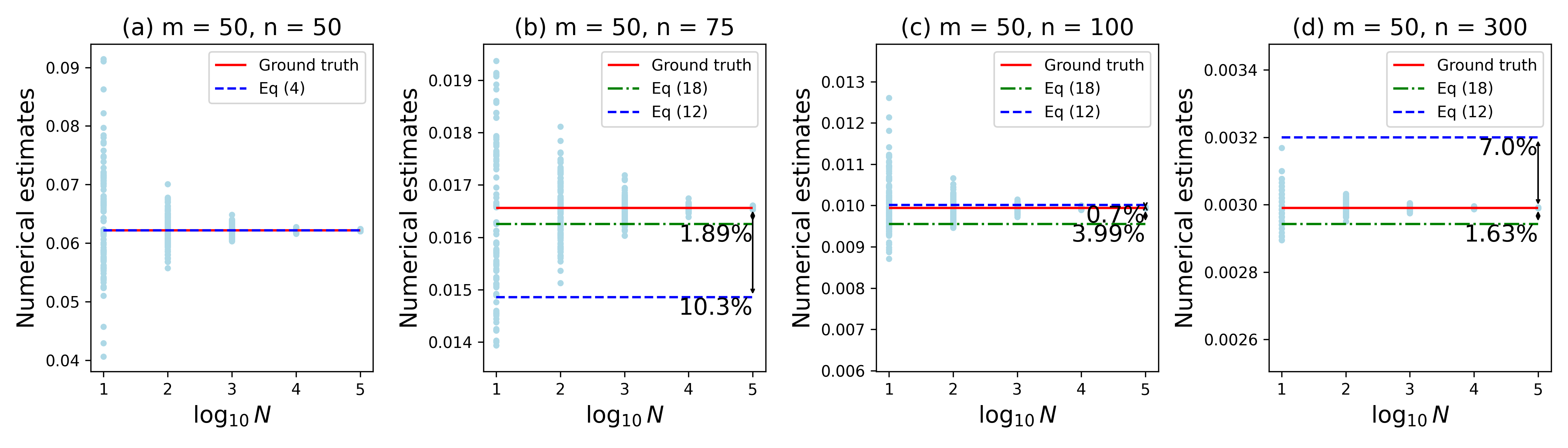}
    \caption{Convergence and accuracy comparison between numerical estimates and formulas.}
    \label{fig:convergence}
\end{figure}

Figures \ref{fig:convergence} (a)-(d) illustrate the convergence pattern of simulation-based estimators across different sample sizes. The average optimal matching distance obtained from each $N$-sized sample is represented by a light-blue dot. The corresponding estimates from Equations \eqref{eq: exp_Xnn}, \eqref{eq: exp_Xnm} and \eqref{eq: exp_Xnm_upper} are represented by the blue dash and green dash-dot curves, respectively. The ground-truth is represented by the red solid line. 
It can be seen that when $N$ is smaller, the simulation results (the blue dots) show clear scatters; 
a sufficiently large sample size is required for the simulation-based estimator to achieve comparable accuracy to that of the analytical formulas. 

Table \ref{tab:convergence} provides a summary of MAPEs and computation times of Equations \eqref{eq: exp_Xnn}, \eqref{eq: exp_Xnm}, \eqref{eq: exp_Xnm_upper}, and the simulation estimators. 
For the balanced case, Equation \eqref{eq: exp_Xnn} predicts with MAPE $0.0130\%$ and running time $4.29\times 10^{-5}$ seconds, which outperforms the simulation estimator with all considered sample sizes (i.e., the minimum MAPE is $0.116\%$ at $N = 10^5$ and the shortest running time is $8.76\times 10^{-4}$ seconds at $N = 10$).  
For the unbalanced cases, an appropriate sample size must be considered to fairly compare the simulation estimators and the formulas. 
For example, it is observed that, to reach the same level of MAPE as Equation \eqref{eq: exp_Xnm_upper}, approximately $10^3, 10^2, 10^1$ samples are needed for $n = 75, 100, 300$, respectively. 
The corresponding running times required by the simulation-based estimators are $4.75\times 10^{-2}, 4.93\times 10^{-3}, 6.84\times 10^{-4}$ seconds. 

Also, it is not surprising that, as $n$ increases, a smaller sample size becomes sufficient for the simulation-based estimator. This can be explained by the asymptotic behavior of $\mathbb{E}[X_{m,n}]$: as $n \to \infty$, $\mathbb{E}[X_{m,n}]\to l$, and the variance of $X_{m,n}$ converges to zero. A more detailed discussion can be found in a working paper \citep{shen_zhai_ouyang_2024}. 

Nevertheless, our proposed formulas still have strong advantages over simulation-based estimators. 
Specifically, Equation \eqref{eq: exp_Xnm} is significantly faster than the simulation-based estimator across all cases due to its linear time complexity. 
In addition, Equation \eqref{eq: exp_Xnm_upper} has the advantage that the recursive computation can cache all the intermediate results. 
As in the current example, an all-inclusive recursive computation of $\mathbb{E}[X_{50, 300}]$ yields the values of $\mathbb{E}[X_{m, n}]$ for all combinations of $1 \le m\leq 50, 1 \le n\leq 300$ (i.e., a total of $15,000$ values). Although this all-inclusive cost is relatively larger in the highly unbalanced case (i.e., $n=300$), the average computation time per parameter combination (around $3\times10^{-7}$ seconds) is quite attractive. }

\begin{table}[htbp]
\centering
\caption{Convergence and accuracy comparison between analytical formulas and simulation estimators.}
\label{tab:convergence}
\rotatebox{90}{
\begin{tabular}{|c|c|c|c|c|c|c|c|c|c|c}
\hline
\multirow{2}{*}{$n$} & \multirow{2}{*}{$N$}
  & \multicolumn{4}{c|}{MAPE}
  & \multicolumn{4}{c|}{Total running time (sec)} \\ \cline{3-10}
 & & Eq.~(5) & Eq.~(12) & Eq.~(18) & Sample mean
   & Eq.~(5) & Eq.~(12) & Eq.~(18) & Sample mean \\ \hline

\multirow{5}{*}{50}
 & $10^1$ & \multirow{5}{*}{0.0130\%} & \multirow{5}{*}{/} & \multirow{5}{*}{/} & 12.1\% & \multirow{5}{*}{$4.29 \times 10^{-5}$}&\multirow{5}{*}{/} & \multirow{5}{*}{/} & 8.76$\times 10^{-4}$\\ 
\cline{2-2}\cline{6-6}\cline{10-10}
 & $10^2$ & & &  & 3.45\% &  &  & & $8.30\times 10^{-3}$ \\ 
\cline{2-2}\cline{6-6}\cline{10-10}
 & $10^3$ & & &  & 1.22\% &  & & & $8.32\times 10^{-2}$ \\ 
\cline{2-2}\cline{6-6}\cline{10-10}
 & $10^4$ & & &  & 0.307\% &  & & & $0.871$ \\ 
\cline{2-2}\cline{6-6}\cline{10-10}
 & $10^5$ & & &  & 0.116\% &  & & & 8.63 \\
\cline{1-7}\cline{8-10}

\multirow{5}{*}{75}
 & $10^1$ & \multirow{5}{*}{/} & \multirow{5}{*}{10.3\%} & \multirow{5}{*}{1.89\%} & 8.84\% &\multirow{5}{*}{/} &\multirow{5}{*}{$9.21\times 10^{-5}$} & \multirow{5}{*}{\shortstack{$1.08\times 10^{-3}$\\ for 3,750 values\\
 (avg. $2.88\times 10^{-7}$)}} & $4.03\times 10^{-4}$\\ 
\cline{2-2}\cline{6-6}\cline{10-10}
 & $10^2$ & & &  & 2.89\% &  &  & & $4.14\times 10^{-3}$ \\ 
\cline{2-2}\cline{6-6}\cline{10-10}
 & $10^3$ & & &  & 0.976\% &  & & & $4.75\times 10^{-2}$ \\ 
\cline{2-2}\cline{6-6}\cline{10-10}
 & $10^4$ & & &  & 0.256\% &  & & & $0.428$ \\ 
\cline{2-2}\cline{6-6}\cline{10-10}
 & $10^5$ & & &  & 0.100\% &  & & & $4.21$ \\
\cline{1-7}\cline{8-10}

\multirow{5}{*}{100}
 & $10^1$ & \multirow{5}{*}{/} & \multirow{5}{*}{0.7\%} & \multirow{5}{*}{3.99\%} & 5.90\% &\multirow{5}{*}{/} & \multirow{5}{*}{$5.82\times 10^{-5}$} & \multirow{5}{*}{\shortstack{$1.79\times 10^{-3}$\\ for 5,000 values \\
 (avg. $3.58\times 10^{-7}$)}} & $4.41\times10^{-4}$ \\
 
\cline{2-2}\cline{6-6}\cline{10-10}
 & $10^2$ & & &  & 2.00\% &  &  & & $4.93\times 10^{-3}$ \\ 
\cline{2-2}\cline{6-6}\cline{10-10}
& $10^3$ & & &  & 0.560\% &  & &  & $4.87\times 10^{-2}$ \\ 
\cline{2-2}\cline{6-6}\cline{10-10} 
& $10^4$ & & &  & 0.185\% &  & & & $0.494$ \\ 
\cline{2-2}\cline{6-6}\cline{10-10}
 & $10^5$ & & &  & 0.0599\% &  & & & 4.91 \\ 
\cline{1-7}\cline{8-10}

\multirow{5}{*}{300}
 & $10^1$ & \multirow{5}{*}{/} & \multirow{5}{*}{7.0\%} & \multirow{5}{*}{1.63\%} & 1.27\% & \multirow{5}{*}{/} & \multirow{5}{*}{$5.34\times10^{-5}$} &
 \multirow{5}{*}{\shortstack{$4.82\times 10^{-3}$\\ for 15,000 values \\ (avg. $3.21\times 10^{-7}$)}} 
 & $6.84\times 10^{-4}$ \\ 
\cline{2-2}\cline{6-6}\cline{10-10}
 & $10^2$ &  & & & 0.440\% &  & & & $7.70\times 10^{-3}$ \\ 
\cline{2-2}\cline{6-6}\cline{10-10}
 & $10^3$ &  & & &  0.139\%&  & & & $7.47\times 10^{-2}$ \\ 
\cline{2-2}\cline{6-6}\cline{10-10}
 & $10^4$ &  & & & 0.0430\% &  & & & $0.752$ \\ 
\cline{2-2}\cline{6-6}\cline{10-10}
 & $10^5$ &  & & & 0.0151\% &  & & &  7.52\\ \hline
\end{tabular}
}
\end{table}


\subsection{Verification of Network RBMP}

In this section, we validate the accuracy of the proposed formulas for 
We fix $L=1$ du and $\mu = 5$ [1/du], but vary $D$ and $\lambda$. For any parameter combination, 100 RBMP instances are randomly generated, each solved through Equation \eqref{eq: def}-\eqref{eq: def_ctd} by a standard linear program solver, and the optimal matching distances are averaged across the 100 realizations.

We build a series of $D$-regular networks with node degree $D \in \{3,4,6\}$, each with 36 total number of unit-length edges. 
We further vary $\lambda$ from 5 to 25 [1/du]. 
Figures \ref{fig: network} (a)-(c) compare the simulation averages (red solid curve) with estimations by Equation \eqref{eq: exp_XE_upper_corrected} (blue dashed curves), Equation \eqref{eq: X_G} with exact $|N_k|$ (green dot-dash curves), and Equation \eqref{eq: X_G} with approximated $|N_k|= (D-1)^{k+1}$ (green cross markers). 
Each Monte-Carlo simulation instance, represented by a light-blue dot, is also plotted. 
It is observed that the estimations by Equation \eqref{eq: X_G} fit tightly to the simulation average across all parameter combinations, regardless of whether $|N_k|$ is approximated or not. The average relative errors of Equation \eqref{eq: X_G} with exact $|N_k|$ values are 8.45\%, 4.73\%, and 3.40\% for $D = 3, 4, 6$, respectively. With approximated $|N_k|$ values, the average relative errors are 8.54\%, 4.74\%, and 3.40\% for $D = 3, 4, 6$, respectively. 
The difference between the two estimations is almost negligible. 
As previously explained, this is primarily because the probability of matching quickly approaches zero as $k$ increases, and the existence of loops in the network barely has any effect on the results.

This indicates that, in general, Equation \eqref{eq: X_G} with approximated $|N_k|$ can already provide very accurate predictions in a wide range of $D, L, \lambda/\mu$ combinations.  
In the meantime, we observe that Equation \eqref{eq: exp_XE_upper_corrected} also estimates the average distance accurately when $\lambda\gg \mu$. 
Specifically, when $\lambda \geq 2\mu$, the average relative errors of Equation \eqref{eq: exp_XE_upper_corrected} are 9.31\%, 6.72\%, and 5.96\% for $D=3,4,6$, respectively. Recall from Section \ref{sec: network model}, as $\lambda\gg \mu$, there are more chances for a point in $U$ to be matched locally, which indicates that $\alpha$ converges to 0 and Equation \eqref{eq: exp_XE_upper_corrected} will predict the distance almost as as well as Equation \eqref{eq: X_G}.
\begin{figure}
    \centering
    \includegraphics[width=1\textwidth]{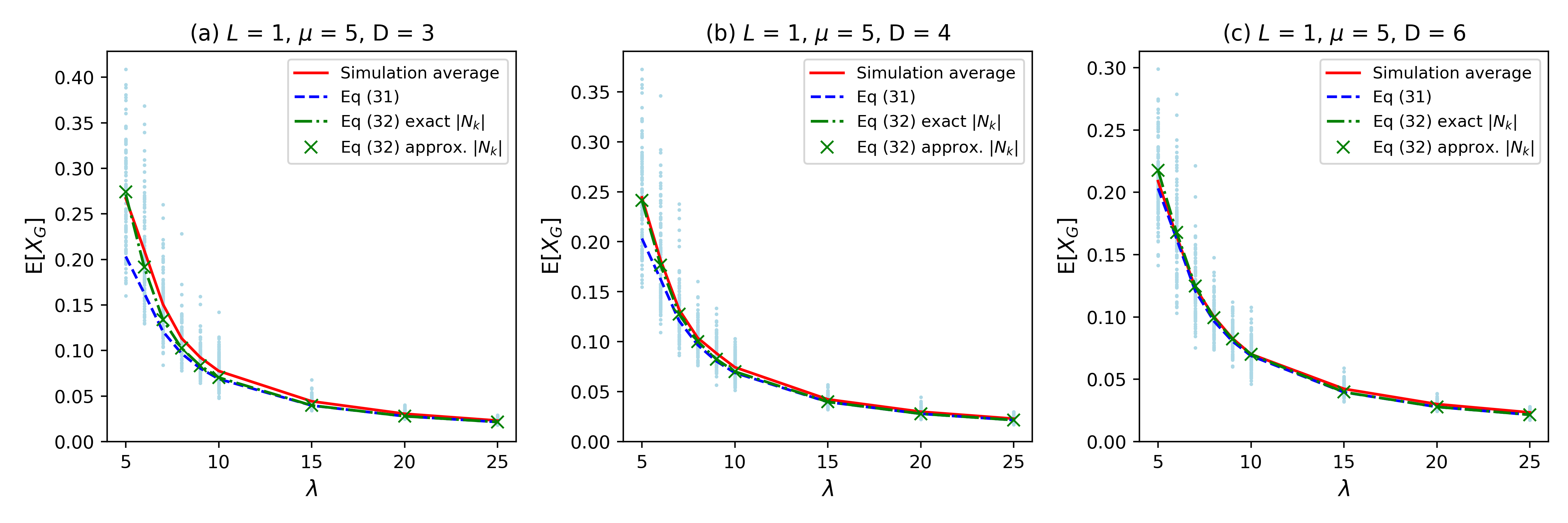}
    \caption{Verification of network RBMP}
    \label{fig: network}
\end{figure}

\section{Conclusion}
\label{sec: Conclusion}
This paper presents a set of closed-form formulas, without curve-fitting or statistical parameter estimation, that can provide accurate estimates for random bipartite matching problems in one-dimensional spaces and networks. These formulas can be directly used in mathematical programs to evaluate and plan resources for many transportation services.
We first study the one-dimensional RBMP on a lattice and
relate the matching distance to the area between a random walk path and the x-axis, and then derive a closed-form formula for balanced matching. 
For unbalanced matching, we first develop a closed-form but approximate formula by analyzing the properties of unbalanced random walks following the optimal removal of a subset of unmatched points. Then, we introduce a set of recursive formulas that yields tight upper bounds based on the analysis of unbalanced random walks. 
Next, three one-dimensional problem variants are discussed, including RBMPs with periodic boundaries, continuously distributed points, and arbitrary-length lines.
Building upon these results, we derive the expected optimal matching distance in a regular graph, in which points are distributed on equal-length edges based on spatial Poisson processes, by quantifying the expected distance when the point is locally matched (i.e., matched within the same edge) or globally matched (i.e., matched across different edges). 
Results indicate that our proposed formulas all provide quite accurate distance estimations for one-dimensional line segments and networks under different conditions. 

Nevertheless, our proposed models build upon several assumptions and approximations, which may be relaxed in the future. 
\textcolor{Black}{
For example, this paper assumes that sample points are distributed either on a lattice or uniformly, the matching cost between points is linear, and the boundary is either open or periodic -- all these assumptions are motivated by transportation applications. It will be interesting to build upon ideas from this paper to explore alternative problem settings that may be suitable for other application contexts. In addition, 
}%
\textcolor{Black}{
in Section \ref{subsec: correction}, we used results from the lattice problem as first-order approximations to the unbalanced match distances between uniformly distributed points.
This approach leads to an overestimation, as shown in Proposition \ref{prop: uniform_vs_lattice}, particularly when $n \gg m$.} Although we propose an approximate correction term in Section \ref{subsec: correction} to address this issue, alternative (better) models could be explored in the future. Further analysis could also be conducted to understand how such correlations among matched pairs would affect the optimal point removals for unbalanced problems. 
In addition, while Section \ref{sec: numerical} shows that the distance estimator by Equation \eqref{eq: exp_Xnm_upper} is quite accurate, it requires solving a recursive formula, which is computationally more cumbersome. It would be interesting to explore alternative methods that can provide simpler estimates of similar accuracy. 
\textcolor{Black}{
Moreover, \ref{apx: DP} shows that the proposed DP method provides a fast way to compute one-dimensional bipartite matching instances. It will be worthwhile to conduct a precise time complexity analysis and compare its performance against that of alternative methods (e.g., the cost scaling algorithm \citep{GOLDBERG19971} and the modified Jonker-Volgenant algorithm \citep{Scipy}).} For networks, this paper opens the door to many interesting new questions. 
For example, we plan to take a deeper dive into the properties of balanced optimal matching (where matching pairs are more strongly correlated) in 
certain types of graphs, such as trees, cactus graphs, and series-parallel graphs. Moreover, future research should develop methods to estimate the expected matching distance in networks with varying edge lengths, varying degrees, and heterogeneous point densities.

\section*{Acknowledgments}
The very helpful comments from the editors and reviewers are gratefully acknowledged.

\bibliographystyle{apalike}  
\bibliography{Batch_1d}  

\newpage
\appendix

\renewcommand{\thesection}{Appendix \Alph{section}}

\section{Proof for Equation \eqref{periodic_cond_3}}
\label{apx: optimal}
Let random variable $M_{n,K}$ denote the number of times a random walk with $2n$ steps reaches a value of $K$. 
The random walk could reach $K$ at the $i$-th step if these $i$ steps include $\frac{i+K}{2}$ upward steps and $\frac{i-K}{2}$ downward steps.
Since this random walk starts from $0$, we must have $i \ge K$, and $i-K$ must be an even number; i.e., $i\equiv K\Mod{2}$. 
As such, the probability for the random walk to reach $K$ exactly at the $i$-th step (which is a Bernoulli random variable)
is
\begin{align*}
\frac{\binom{n}{(i+K)/2} \binom{n}{(i-K)/2}}{\binom{2n}{i}}, \quad \forall i \geq K, i\equiv K\Mod{2}.
\end{align*}
Then, $\mathbb{E}[M_{n, k}]$ can be obtained by summing the probabilities for all the 
Bernoulli random variables, as follows: 
\begin{align*}
\mathbb{E}[M_{n,K}] = \sum_{\{\forall i \geq K, i\equiv K\Mod{2}\}} 
\frac{\binom{n}{(i+K)/2} \binom{n}{(i-K)/2}}{\binom{2n}{i}}. 
\end{align*}
Given any $i$, the corresponding probability represents the probability mass function of a hyper-geometric distribution, which is known to be unimodal. It is easy to show that its maximum is taken at $K = 0$. Thus, we have:
\begin{align*}
\mathbb{E}[M_{n,K}] \leq 
\sum_{\{\forall i\equiv 0 \Mod{2}\}}\frac{\binom{n}{i/2} \binom{n}{i/2}}{\binom{2n}{i}}.
\end{align*}
A binomial coefficient is known to have the following bounds \citep{MacWilliams_1977}:
\begin{align*}
\sqrt{\frac{2n}{8 i(2n-i)}}2^{2nH(i/(2n))}\leq \binom{2n}{i} \leq \sqrt{\frac{2n}{2\pi i(2n-i)}}2^{2nH(i/(2n))}, \ \forall 1 \leq i \leq 2n-1,
\end{align*}
where $H(\cdot)$ is the binary entropy function.
The upper bound of $\mathbb{E}[M_{n,K}]$ can be further relaxed to the following: 
\begin{align*}
\mathbb{E}[M_{n,K}]
&\leq 
2 + \sum_{\{i=2, \dots, 2n-2\}}
\frac
{\left(\sqrt{\frac{n}{2\pi i/2(n-i/2)}}2^{nH(i/(2n))}\right)^2}
{\sqrt{\frac{2n}{8 i(2n-i)}}2^{2nH(i/(2n))}}
= 2+ \frac{2\sqrt{2}}{\pi}\sum_{\{i=2, \dots, 2n-2\}}
\sqrt{\frac{2n}{i(2n-i)}}.\\
&\leq 2+ \frac{4}{\pi}\sum_{\{i=2, \dots 2n-2\}}
\sqrt{\frac{1}{\min \{i, 2n-i\}}}
\leq 2+ \frac{8}{\pi}\sum_{\{i=2, \dots 2\lceil \frac{n}{2}\rceil\}}
\sqrt{\frac{1}{i}} 
\leq 2+ \frac{8}{\pi}\cdot 2\sqrt{n+1}. 
\end{align*}
%
Given $l = \frac{1}{2n}$, we have
$$
\mathbb{E}\left[\sum_{\{i\in I:S(x;I) - K=0\}}l\right] = 
\mathbb{E}[l\cdot M_{n, K}] 
\le \frac{1}{2n} (2+ \frac{8}{\pi}\cdot 2\sqrt{n+1}),
$$
which clearly converges to $0$ when $n$ goes to infinity. 
This 
completes the proof.
\newpage
\section{Solution Approaches for 1D RBMP Instances}
\label{apx: DP}
While any instances of 1D RBMPs can be solved by standard linear programming techniques in polynomial time, this section presents alternative methods that may be computationally more efficient for large-scale cases.

First, for balanced cases with open boundaries, each problem instance can be solved easily using the approach described in Section \ref{subsec: approximation}, where the total optimal matching distance equals the area size under the cumulative net supply curve. This approach has a worst-case time complexity of $\mathcal{O}(n)$.
Second, for balanced case with periodic boundaries, each problem instance can be solved via Equation \eqref{periodic_min} by simply enumerating all points in $I$ for shifting the cumulative net supply curve. 
This approach has a worst-case time complexity of $\mathcal{O}(n^2)$.

Finally, for unbalanced case with open boundaries, 
the total matching distance equals the area size under the cumulative net supply curve after removing the $n-m$ excessive supply points. We propose a dynamic programming (DP)-based approach, which has a worst-case time complexity of $\mathcal{O}(n^3)$
, as follows. 
We define $n-m$ stages, indexed by $k \in \{1, 2, \dots, n-m\}$, each for one point removal.
The state of the system at stage $k$ is represented by the location of a point in $U\cup V$ along the x-axis, denoted as $s_{k}\in [0,1]$, such that all points located in $[0,s_{k}]$ have been ``inspected for removal." 
The action at stage $k$ is to find the next supply point $v_k \in V$, satisfying $x_{v_k} > s_k$ and $S(x_{v_k};I) = k$ from Proposition \ref{prop: S_eq_k}, to remove. 
The system state then transitions to the location of the newly removed point $v_k$ at the next stage;
i.e., $s_{k+1} := x_{v_k}$.
The cost associated with this single removal, denoted $c_k(v_k)$, is computed as the area under the cumulative net supply curve between interval $(s_k, x_{v_{k}})$, considering all selected supply points from stages $1$ to $k$ are removed. 
The initial state in stage 1 is at the location of the leftmost point in  $U\cup V$.
At the final stage when $k = n-m$, the terminating cost must include an additional area from interval $(x_{v_{n-m}}, 1)$. 
Let $\mathbf{V}_k(s_k)$ denote the value function of state $s_k$ at stage $k$, which represents the tail area under the post-removal curve over interval $(s_k, 1)$.
The optimal point removal decisions can be obtained by solving the following Bellman equation using backward induction. 
\begin{align*}
&\mathbf{V}_{k}(s_k) = \min\limits_{\substack{ \{v_{k}\in V :\ x_{v_{k}} > s_{k}, \\ S(x_{v_{k}};I) = k\}}} 
      \left\{ 
      c_k(v_k) + \mathbf{V}_{k+1}(s_{k+1}) 
      \right\}
      \quad \forall k \in \{1,\dots n-m-1\},\\
&\mathbf{V}_{n-m}(s_{n-m}) = \min\limits_{\substack{ \{v_{n-m}\in V :\ x_{v_{n-m}} > s_{n-m}, \\ S(x_{v_{n-m}};I) = n-m\}}}  \{c_{n-m}(s_{n-m})\}.
\end{align*}

{\color{Black}
Next, we compare the performance of the proposed DP algorithm with that of a state-of-art modified Jonker-Volgenant algorithm \citep{Scipy} on a lattice with an open boundary condition. 
\footnote{\textcolor{Black}{We have also tested with Gurobi \citep{gurobi} and a cost scaling method \citep{GOLDBERG19971}, both of which are outperformed by the modified Jonker-Volgenant algorithm}}
We set $m \in \{100, 500,$ $1000, 5000, 10000\}$ and consider three representative cases for $n$: nearly-balanced case, $n = 1.5m$; unbalanced case, $n = 2m$; and highly-unbalanced case, $n = 6m$. For each $(m,n)$ combination, we randomly generate 100 instances and 
compute the average solution time across these 100 instances. 

The results are shown in Figure \ref{fig:dp}. 
Average running times of the DP and modified Jonker-Volgenant algorithm are represented by the red solid line with circle markers and blue dash line with cross markers, respectively. It is observed that the proposed DP method has a promising performance: it notably outperforms the modified Jonker-Volgenant algorithm especially when the $n/m$ ratio is small (e.g., $\le 2$) and the problem size is large 
(e.g., $m\geq 5000$). When the problem is highly unbalanced (e.g., $n/m=6$), DP has a similar average running time as the modified Jonker-Volgenant algorithm. This is also unsurprising, as the solutions for highly unbalanced problems are generally easier to obtain.
}

\begin{figure}
    \centering
    \includegraphics[width=1\linewidth]{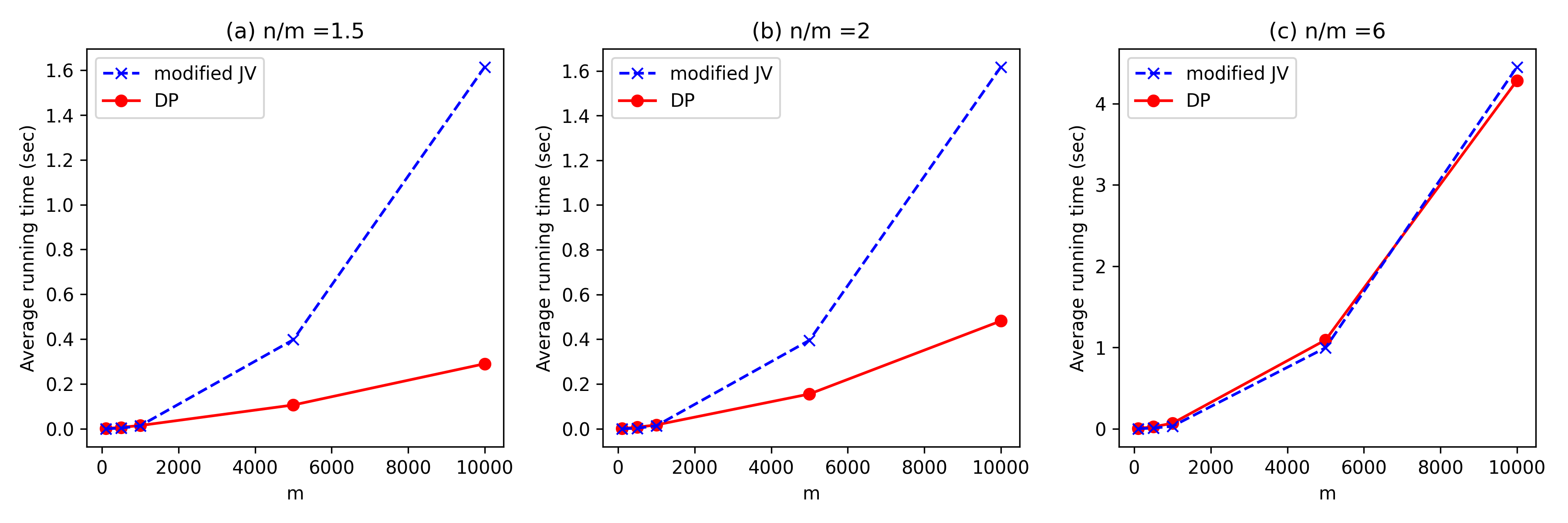}
    \caption{Running time comparison of dynamic programming method.}
    \label{fig:dp}
\end{figure}

\pagebreak

\section{List of notation}
\nopagebreak
\renewcommand{\arraystretch}{1}
\begin{table}[ht]
\centering
\begin{tabular}{p{2cm}p{14cm}}
\toprule
\toprule
\textbf{Notation} & \textbf{Description} \\
\midrule
$m, n$ & Cardinalities of the two point sets in an one-dimensional RBMP \\
$U$, $V$ & Two sets of points for each one-dimensional RBMP realization\\
$I$ & Set containing all points in $U$ and $V$ \\
$E$ & Set of edges connecting $u\in U$ and $v\in V$ \\
$y_{uv}$ & 1 if $u\in U$ is matched to $v\in V$, 0 otherwise \\
$x_i$ & x-coordinate of a point $i\in I$\\
$z_i$ & Supply value of a point $i\in I$ \\
$X_{m,n}$ & Average optimal matching distance per point in a lattice problem\\
$l$ & Step size between any two consecutive points in $I$ in a lattice problem\\
$l_i$ & Step size from a point $i\in I$ to its next point in a problem with uniform point distribution\\
$S(x;I')$ & Net supply curve for any coordinate $x$ and subset of points $I'\subseteq I$ \\
$A(x;I')$ & Total absolute area between curve $S(x;I')$ and x-axis from 0 to $x$ for lattice case\\
$A^\text{u}(x;I')$ & Total absolute area between curve $S(x;I')$ and x-axis from 0 to $x$ for uniform point distribution case\\
$B(n)$ & Expected total absolute area between the path of a random walk with $2n$ unit-length steps and x-axis \\
$V'$ & An arbitrary set of unmatched/removed points in $V$ \\
$V^*$ & Optimal set of removed points \\
$\hat{V}$ & Set of removed points from the proposed point removal process\\
$v'_k, v^*_k, \hat{v}_k$ & $k$-th removed point along the x-axis in $V'$, $V^*$, $\hat{V}$ \\
$\textbf{l}$ & Vector of step sizes with uniform point distribution\\
$\textbf{s}(V')$ & Vector of the absolute values of post-removal cumulative net supply\\
$m_k$ & Number of demand points in the $k$-th segment of the post-removal curve $S(x; I\setminus V^*)$\\
$\hat{m}_k$ & Number of demand points in the $k$-th segment of the post-removal curve $S(x; I\setminus \hat{V})$\\
$\hat{m}_{k, 0}$ & Number of demand points with zero net supplies in the $k$-th segment of the post-removal curve $S(x; I\setminus \hat{V})$ \\
$\hat{Z}_{k,a}$ & Total absolute area enclosed by $S(x; I\setminus \hat{V})$ within $(x_{\hat{v}_k}, 1]$, which contains exactly $a$ demand points \\
$Z_{m,n}$ & Total absolute area enclosed by $S(x; I\setminus V^*)$ from 0 to 1\\
$X^\circ_{n,n}$ & Average optimal matching distance per point in a periodic-boundary problem\\
$i^*$ & optimal point in $I$ to minimize the area under the shifted cumulative net supply curve in a periodic-boundary problem\\
\bottomrule
\bottomrule
\end{tabular}
\end{table}
\pagebreak
\begin{table}[ht]
\centering
\begin{tabular}{p{2cm}p{14cm}}
\toprule
\toprule
\textbf{Notation} & \textbf{Description}\\
\midrule
$M_{n,K}$ & Number of times a random walk with $2n$ steps reaches a value of $K$\\
$X^\text{u}_{m,n}$ & Average optimal matching distance per point in a problem with uniform point distribution\\
$L$ & Length of a line (edge) \\
$X_{\text{E}}$ & Average optimal matching distance per point on an arbitrary-length line \\
$\mu, \lambda$ & Densities of the two point sets on an arbitrary-length line \\
$\text{G} = (\mathcal{V}, \mathcal{E})$& Graph with node set $\mathcal{V}$ and edge set $\mathcal{E}$\\
$D$ & Degree of node in $\mathcal{V}$ \\
$U_e, V_e$ & Realized point sets on an edge $e\in \mathcal{E}$  \\
$m_e, n_e$ & Cardinality of point sets $U_e$ and $V_e$ \\
$X_\text{G}$ & Average optimal matching distance per point in a graph\\
$X^\text{l}_\text{G}, X^\text{g}_\text{G}$ & Average optimal local and global matching distances in a graph\\
$\alpha$ & Probability for global matching\\
$\phi(\cdot), \Phi(\cdot)$ & Probability density function and cumulative distribution function of standard normal distribution\\
$U_e^+, V_e^+$ & Remaining point sets on an edge $e\in \mathcal{E}$ after local matching process\\
$N_k$ & $k$-th layer of edges for a point $u\in U_e^+$ in breadth-first search\\
$e^*$ & Edge containing the matched point $v$ for a point $u\in U_e^+$ \\
$o_0$ & Nearer end of $e$ to $u\in U_e^+$ \\
$o_k$ & Nearer end of $e^*$ to $o_0$ \\
$d_1$ & Distance from $u\in U_e^+$ to $o_0$ \\
$d_2$ & Distance from $o_0$ to $o_k$ \\
$d_3$ & Distance from $o_k$ to the matched point $v$ of $u\in U_e^+$\\
$s_k$ & System state at stage $k$ \\
$c_k(\cdot)$ & Cost function at stage $k$ \\
$\mathbf{V}_k(\cdot)$ & Value function of state $k$\\
$N$ & Sample size of numerical experiments\\
\bottomrule
\bottomrule
\end{tabular}
\end{table}
\pagebreak

\color{Black}
\end{document}